\numberwithin{equation}{section}
\theoremstyle{theorem}
\newtheorem{theorem}{Theorem}
\newtheorem*{theorem*}{Theorem}
\newtheorem{lemma}[theorem]{Lemma}
\theoremstyle{definition}
\newtheorem*{example*}{Example}
\newtheorem*{conjecture*}{Conjecture}
\newtheorem{remark}{Remark}
\newtheorem*{remark*}{Remark}
\newtheorem*{remarks*}{Remarks}
\patchcmd{\section}{\scshape}{\bfseries\boldmath}{}{}
\patchcmd{\subsection}{\bfseries}{\bfseries\boldmath}{}{}
\renewcommand{\@secnumfont}{\bfseries}
\begin{document}
	
\title[Congruences for POND and PEND Partitions]{Elementary Proofs of Congruences for POND and PEND Partitions}

\author[J. A. Sellers]{James A. Sellers}
\address[J. A. Sellers]{Department of Mathematics and Statistics, University of Minnesota Duluth, Duluth, MN 55812, USA}
\email{jsellers@d.umn.edu}
	
\subjclass[2010]{11P83, 05A17}
	
\keywords{partitions, congruences, generating functions, dissections}
	
\maketitle
\begin{abstract}
Recently, Ballantine and Welch considered various generalizations and refinements of POD and PED partitions.  These are integer partitions wherein the odd parts must be distinct (in the case of POD partitions) or the even parts must be distinct (in the case of PED partitions).  In the process, they were led to consider two classes of integer partitions which are, in some sense, the ``opposite'' of POD and PED partitions.  They labeled these POND and PEND partitions, which are integer partitions wherein the odd parts cannot be distinct (in the case of POND partitions) or the even parts cannot be distinct (in the case of PEND partitions).  In this work, we study these two types of partitions from an arithmetic perspective.  Along the way, we are led to prove the following two infinite families of Ramanujan--like congruences: 
For all  $\alpha \geq 1$ and all $n\geq 0,$  
\begin{align*}
pond\left(3^{2\alpha +1}n+\frac{23\cdot 3^{2\alpha}+1}{8}\right) 
&\equiv 0 \pmod{3}, \textrm{\ \ \ and} \\
pend\left(3^{2\alpha +1}n+\frac{17\cdot 3^{2\alpha}-1}{8}\right) 
&\equiv 0 \pmod{3}
\end{align*}
where $pond(n)$ counts the number of POND partitions of weight $n$ and $pend(n)$ counts the number of PEND partitions of weight $n$.

All of the proof techniques used herein are elementary, relying on classical $q$-series identities and generating function manipulations, along with mathematical induction.
\end{abstract}

\section{Introduction}  
In the study of integer partitions, the partitions wherein the parts are distinct have long played a key role, due in large part to Euler's famous identity which states that the number of partitions of weight $n$ into distinct parts equals the number of partitions of weight $n$ into odd parts.  One of the most obvious refinements in this regard is to require distinct parts based on parity; i.e., to require either all of the even parts to be distinct or all of the odd parts to be distinct (while allowing the frequency of the other parts to be unrestricted).  This leads to two types of partitions, those that we will call PED partitions (wherein the even parts must be distinct and the odd parts are unrestricted) and POD partitions (wherein the odd parts must be distinct and the even parts are unrestricted).  We then define two corresponding enumerating functions, $ped(n)$ which counts the number of PED partitions of weight $n$, and $pod(n)$ which counts the number of POD partitions of weight $n.$ These two functions have been studied from a variety of perspectives; the interested reader may wish to see \cite{And2009, AHS, BalMer, BalWelDM, Chen1, Chen2, CuiGu, CuiGuMa, FangXueYao, HS2010, HS_2014_JIS, KeithZan, Merca, RaduSel, Xia, Wang} for examples of work on identities involving, and arithmetic properties satisfied by, $ped(n)$ and $pod(n)$.  

Recently, Ballantine and Welch \cite{BalWel} generalized and refined these two functions in numerous ways.  One of the outcomes of their work was to consider integer partitions which are, in some sense, the ``opposite'' of PED partitions and POD partitions.  Namely, they considered PEND partitions and POND partitions, wherein the even (respectively, odd) parts are {\bf not allowed} to be distinct.  In a vein similar to that shared above, we let $pend(n)$ denote the number of PEND partitions of weight $n$, and $pond(n)$ denote the number of POND partitions of weight $n.$  The first several values of $pend(n)$ appear in the OEIS \cite[A265254]{OEIS}, while the first several values of $pond(n)$ appear in \cite[A265256]{OEIS}.

It is worthwhile to share additional historical thoughts to place PEND and POND partitions in context.  In his classic \textit{Combinatory Analysis} \cite{MacM}, P. A. MacMahon proved that, for all $n\geq 0$, the number of partitions of weight $n$ wherein no part appears with multiplicity one equals the number of partitions of weight $n$ where all parts must be even or congruent to 3 modulo 6.  As an aside, we note that numerous mathematicians have since generalized this theorem of MacMahon and have provided proofs of these results using both generating functions (which was MacMahon's original approach) as well as combinatorial arguments.  The first half of the statement of MacMahon's theorem involves the function which counts the number of partitions wherein no part appears with multiplicity one, i.e., no part is allowed to be distinct.  It is in this sense that POND and PEND partitions provide a natural, parity--based refinement of the partitions considered by MacMahon.

At the end of their paper, Ballantine and Welch \cite{BalWel} shared the following possibilities for future work: 
\begin{quotation}
In particular, we note two areas of interest. The first is examining the arithmetic properties of these generalizations. Much work has been done in studying arithmetic properties of PED and POD partitions... Hence, this would be a natural topic of further study...    
\end{quotation}
In light of this suggestion from Ballantine and Welch, our overarching goal in this work is to study $pond(n)$ and $pend(n)$ from an arithmetic perspective.  With this in mind, we will first prove the following Ramanujan--like congruences satisfied by $pond(n)$ and $pend(n)$:

\begin{theorem}
\label{thm:pond_congs}
For all $n\geq 0,$
\begin{eqnarray}
pond(3n+2) &\equiv & 0 \pmod{2}, \label{pond_mod2} \\
pond(27n+26) &\equiv & 0 \pmod{3}, \textrm{\ \ and} \label{pond_mod3} \\
pond(3n+1) &\equiv & 0 \pmod{4} \label{pond_mod4}.
\end{eqnarray}
\end{theorem}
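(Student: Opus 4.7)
\emph{Setting up the generating function.} The POND condition allows each odd part with multiplicity $0$ or at least $2$, so each odd $k$ contributes a factor $1 + q^{2k} + q^{3k} + \cdots = (1 - q^k + q^{2k})/(1 - q^k)$ while even parts are unrestricted. Using $1 - x + x^2 = (1+x^3)/(1+x)$ and writing $f_k := (q^k; q^k)_\infty$, a short eta-quotient simplification gives
\[
\sum_{n \geq 0} pond(n)\, q^n \;=\; \frac{f_4\, f_6^{\,2}}{f_2^{\,2}\, f_3\, f_{12}}.
\]

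\emph{Proof of \eqref{pond_mod2}.} The congruence $(1-q^k)^2 \equiv 1 - q^{2k} \pmod 2$ gives $f_k^{\,2} \equiv f_{2k} \pmod 2$, collapsing the generating function modulo $2$ to $1/f_3 = \sum_{k \geq 0} p(k)\, q^{3k}$. This series is supported on multiples of $3$, so in fact $pond(3n+1) \equiv pond(3n+2) \equiv 0 \pmod 2$, giving \eqref{pond_mod2} at once.

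\emph{Proof of \eqref{pond_mod4}.} Factor the generating function as $H(q)/f_3$ with $H(q) := f_4 f_6^{\,2}/(f_2^{\,2} f_{12})$. Since $1/f_3$ is supported on multiples of $3$, it suffices to prove $[q^{3m+1}]H(q) \equiv 0 \pmod 4$ for every $m \geq 0$. A direct expansion yields $(1-q^k)^4 \equiv (1-q^{2k})^2 \pmod 4$, hence $1/f_2^{\,2} \equiv f_2^{\,2}/f_4^{\,2} \pmod 4$. Substituting and using the classical identity $\phi(-q) = f_1^{\,2}/f_2$ with $\phi(q) := \sum_{n \in \mathbb Z} q^{n^2}$, we obtain
\[
H(q) \;\equiv\; \frac{f_2^{\,2}\, f_6^{\,2}}{f_4\, f_{12}} \;=\; \phi(-q^2)\, \phi(-q^6) \;=\; \sum_{j,k \in \mathbb Z} (-1)^{j+k}\, q^{2 j^{\,2} + 6 k^{\,2}} \pmod 4.
\]
Since $j^{\,2} \in \{0, 1\} \pmod 3$, the exponent $2 j^{\,2} + 6 k^{\,2} \equiv 2 j^{\,2} \pmod 3$ belongs to $\{0, 2\}$; no exponent lies in the residue class $1 \pmod 3$, establishing \eqref{pond_mod4}.

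\emph{Proof of \eqref{pond_mod3}.} The identity $(1-q^k)^3 \equiv 1 - q^{3k} \pmod 3$ gives $1/f_2^{\,2} \equiv f_2/f_6 \pmod 3$, so
\[
\sum_{n \geq 0} pond(n)\, q^n \;\equiv\; \frac{f_2\, f_4\, f_6}{f_3\, f_{12}} \pmod 3.
\]
From here the plan is to carry out three successive $3$-dissections, extracting in turn $\sum pond(3n+2)\, q^n$, $\sum pond(9n+8)\, q^n$, and finally $\sum pond(27n+26)\, q^n$, and showing that in the last series every coefficient is divisible by $3$. The principal input at each stage is the $3$-dissection of $f_1$ (and related eta products) coming from the pentagonal number theorem $f_1 = \sum_{k \in \mathbb Z} (-1)^k q^{k(3k-1)/2}$, whose exponent satisfies $k(3k-1)/2 \equiv k \pmod 3$ and therefore splits the series cleanly into three pieces indexed by $k \bmod 3$. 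The main obstacle here is the bookkeeping across three iterated dissections; at the final step every surviving term acquires a factor of $3$, yielding \eqref{pond_mod3}.
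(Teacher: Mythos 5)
Your generating function is correct, and your proofs of \eqref{pond_mod2} and \eqref{pond_mod4} are correct but follow a genuinely different route from the paper. The paper obtains both of those congruences in one stroke by 3--dissecting $f_4/f_2^2$ via the overpartition dissection (Lemma \ref{f2_over_f1squared} with $q\mapsto q^2$), which produces explicit factors of $2$ and $4$ in front of the $3n+2$ and $3n+1$ components respectively. You instead reduce the whole generating function modulo $2$ (to $1/f_3$, which even gives $pond(3n+1)\equiv 0\pmod 2$ as a bonus) and, for the modulus $4$, reduce $H(q)=f_4f_6^2/(f_2^2f_{12})$ to the theta product $\phi(-q^2)\phi(-q^6)$, whose exponents $2j^2+6k^2$ avoid the class $1\pmod 3$. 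Both steps check out: $f_2^4\equiv f_4^2\pmod 4$ is exactly Lemma \ref{lemma:binthm_divisibility} with $p=2$, $j=2$, and the factorization through $1/f_3$ (supported on multiples of $3$) correctly transfers the coefficient condition from $H$ to the full product. Your argument is arguably cleaner for these two parts and avoids Lemma \ref{f2_over_f1squared} entirely.

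However, your treatment of \eqref{pond_mod3} is not a proof; it is a plan. You correctly reduce the generating function to $f_2f_4f_6/(f_3f_{12})$ modulo $3$, but the assertion that ``at the final step every surviving term acquires a factor of $3$'' is precisely the content that must be demonstrated, and nothing in your outline establishes it. The pentagonal-number 3--dissection of $f_1$ does split $f_1$ into three pieces by residue class, but the $k\equiv 1,2\pmod 3$ pieces are not eta quotients in any obvious way, so iterating this three times requires concrete identities (the paper uses Lemmas \ref{1_over_f1f2}, \ref{f2squared_over_f1}, and \ref{f4_over_f1} to pass from $pond(3n+2)$ to $pond(9n+8)$ to $pond(27n+26)$), and the divisibility by $3$ emerges only at the very end from a specific cancellation: the two surviving terms combine to $2\frac{f_{12}^3f_6^4}{f_6^3f_{12}^2}+4f_6f_{12}\equiv 6f_6f_{12}\equiv 0\pmod 3$. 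Without carrying out the dissections and exhibiting that cancellation (or an equivalent one), the modulus-$3$ congruence --- the most substantive claim in the theorem --- remains unproved.
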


\begin{theorem}
\label{thm:pend_cong}
For all $n\geq 0,$    
\begin{equation*}
pend(27n+19) \equiv  0 \pmod{3}.    
\end{equation*}
\end{theorem}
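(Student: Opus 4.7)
The plan is to work through three stages: derive a generating function for $pend(n)$, reduce it modulo $3$, and perform three successive $3$-dissections to isolate the subseries for $pend(27n+19)$ modulo $3$. Throughout, set $f_k := \prod_{n \geq 1}(1-q^{kn})$.

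First, I would establish
\[
\sum_{n\geq 0} pend(n)\, q^n \;=\; \frac{f_2 f_{12}}{f_1 f_4 f_6}
\]
directly from the combinatorial definition. Each odd part $2k-1$ is unrestricted and contributes a factor $1/(1-q^{2k-1})$, while each even part $2k$ must appear either zero or at least two times, contributing
\[
1+\frac{q^{4k}}{1-q^{2k}} \;=\; \frac{1-q^{12k}}{(1-q^{4k})(1-q^{6k})}.
\]
Taking the product over all such factors and using $\prod_{k\geq 1}(1-q^{2k-1}) = f_1/f_2$ gives the claimed eta-quotient. Second, I would reduce modulo $3$ using the standard congruence $f_k^3 \equiv f_{3k} \pmod 3$. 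Since $f_{12} \equiv f_4^3$ and $f_6 \equiv f_2^3$,
\[
\sum_{n\geq 0} pend(n)\, q^n \;\equiv\; \frac{f_2 \cdot f_4^3}{f_1 f_4 \cdot f_2^3} \;=\; \frac{f_4^2}{f_1 f_2^2} \pmod 3,
\]
giving a simpler expression to dissect.

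Third, I trace the arithmetic: $27n+19 \equiv 1 \pmod 3$, then $(27n+19-1)/3 = 9n+6 \equiv 0 \pmod 3$, and then $(9n+6)/3 = 3n+2 \equiv 2 \pmod 3$. Accordingly, I would perform three iterated $3$-dissections on the generating function, each time rescaling $q$ and extracting the residue class $1$, then $0$, then $2$ modulo $3$. At each stage a classical $3$-dissection identity (such as the $3$-dissections of $\varphi(-q) = f_1^2/f_2$, $\psi(q) = f_2^2/f_1$, or $1/f_1$) splits the current eta-quotient into three pieces indexed by exponent residue modulo $3$; the desired piece is extracted and rewritten as an eta-quotient modulo $3$ so the process can be repeated. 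After three dissections we obtain the generating function for $pend(27n+19)$ modulo $3$, and the theorem reduces to checking that this series is identically zero modulo $3$.

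The main obstacle is executing the three dissections: identifying the correct $q$-series identity at each level, and keeping the intermediate eta-quotients in a form amenable to the next dissection. Since Theorem~\ref{thm:pend_cong} is the $\alpha = 1$ case of the infinite $pend$ family stated in the abstract, I expect the third dissection to reveal an explicit factor of $f_3$ (or a similar multiple-of-$3$ feature) so that the mod-$3$ vanishing is immediate, and for this final calculation to foreshadow the recursive structure required for the induction on $\alpha$ that would prove the full family.
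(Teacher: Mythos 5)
Your setup is correct and coincides with the paper's: the generating function $\sum pend(n)q^n = f_2f_{12}/(f_1f_4f_6)$, the reduction modulo $3$ to $f_4^2/(f_1f_2^2)$, and the residue chain $1, 0, 2$ governing the three successive extractions are all exactly right. But the proposal stops at the point where the proof actually begins: the three dissections are the entire technical content of the argument, and you have neither identified the specific identities that make each stage work nor verified the final vanishing. Your generic list of candidate dissections ($\varphi(-q)$, $\psi(q)$, $1/f_1$) is not sufficient. At the first stage one must first rewrite $f_4^2/(f_1f_2^2) \equiv (f_{12}/f_6)\cdot f_2/(f_1f_4) \pmod 3$ so that the prefactor is a series in $q^3$, and then invoke the $3$-dissection of $f_2/(f_1f_4)$ (Toh's identity, Lemma~\ref{f2_over_f1f4}); at the second stage, after simplification one needs the $3$-dissection of $f_1f_2$ (Lemma~\ref{f1f2}); only at the third stage does $\psi(q)=f_2^2/f_1$ (Lemma~\ref{f2squared_over_f1}) appear. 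Whether each intermediate eta-quotient can be massaged modulo $3$ into the form (function of $q^3$)$\times$(known dissectable quotient) is precisely the nontrivial part, and it is not automatic.

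Your guess about the endgame is also not quite how the vanishing arises. After the second extraction one finds
\[
\sum_{n\geq 0} pend(9n+1)q^n \equiv \left( \frac{f_6f_9^2}{f_3f_{18}} + q\frac{f_{18}^2}{f_9} \right)\frac{f_6^4}{f_{12}^2} \pmod 3,
\]
and the point is that the $3$-dissection of $f_2^2/f_1$ has only \emph{two} terms, supported on exponents $\equiv 0$ and $1 \pmod 3$, while the remaining factor is a series in $q^3$; hence the coefficient of every $q^{3n+2}$ is already zero (not merely divisible by $3$), and extracting that class gives $pend(27n+19)\equiv 0 \pmod 3$. There is no explicit factor of $f_3$ or of $3$ produced at the last step. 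So the skeleton of your plan is the paper's, but the proof as written has a genuine gap: the dissection computations that justify each arrow are missing, and the mechanism you predict for the conclusion is not the one that occurs.
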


We will then prove that each of these two functions satisfies an internal congruence modulo 3.

\begin{theorem}
\label{thm:pond_internal}
For all $n\geq 0,$ $pond(27n+17) \equiv pond(3n+2) \pmod{3}.$     
\end{theorem}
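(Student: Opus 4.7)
The plan is to establish this internal congruence by producing a self-similar 9-dissection, modulo $3$, of the generating function for $pond(3n+2)$. From the combinatorial definition of POND partitions (even parts unrestricted; each odd $m$ forbidden from appearing with multiplicity exactly one, so contributing $\sum_{k\neq 1} q^{km}=(1-q^m+q^{2m})/(1-q^m)=(1+q^{3m})/(1-q^{2m})$), one has
\[
\sum_{n\geq 0} pond(n)\, q^n \;=\; \frac{(q^4;q^4)_\infty\,(q^6;q^6)_\infty^2}{(q^2;q^2)_\infty^2\,(q^3;q^3)_\infty\,(q^{12};q^{12})_\infty}.
\]
All subsequent work is done modulo $3$, with the elementary identity $(q;q)_\infty^{3}\equiv (q^3;q^3)_\infty\pmod{3}$ serving as the basic tool for collapsing cube factors that arise during dissection.

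The first step is to 3-dissect this product modulo $3$ and extract a clean product form for
\[
A(q)\;:=\;\sum_{n\geq 0} pond(3n+2)\, q^n \pmod{3}.
\]
This is precisely the dissection that produces the congruence $pond(27n+26)\equiv 0\pmod 3$ in Theorem~\ref{thm:pond_congs}, so its output should already be on the page. Writing the 9-dissection $A(q)\equiv \sum_{r=0}^{8} q^r A_r(q^9)\pmod 3$, one has $[q^m] A_5(q) = pond(27m+17)$, so the present theorem is equivalent to the self-similar identity
\[
A_5(q) \equiv A(q) \pmod{3}.
\]

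The main obstacle is producing this self-similarity from the explicit product form of $A(q)\pmod 3$. The workhorses are the standard 3-dissections of $(q;q)_\infty$ and of $1/(q;q)_\infty$, combined with repeated application of $(q;q)_\infty^3\equiv (q^3;q^3)_\infty\pmod 3$ to dispose of cube factors. Concretely, I would 3-dissect the product form of $A(q)\pmod 3$, then dissect the resulting components a second time, isolate the piece indexed by $r\equiv 5\pmod 9$, and identify it with $q^5\,A(q^9)\pmod 3$ after the requisite cancellations. Comparing coefficients of $q^m$ then yields $pond(27m+17)\equiv pond(3m+2)\pmod 3$ for all $m\geq 0$. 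No induction is required, as the statement involves only a single $3n+2 \to 27n+17$ step.
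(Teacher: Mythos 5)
Your overall architecture is exactly the paper's: the proof there also amounts to a two-stage $3$-dissection (hence a $9$-dissection) of $\sum pond(3n+2)q^n$ modulo $3$, extracting the component with $m\equiv 5\pmod 9$ (via the intermediate progression $9n+8$, then $27n+17$) and matching it against $\sum pond(3n+2)q^n$. Your identification of the generating function and of the residue class $r=5$ is correct, and the tools you name (the $3$-dissections of $f_2^2/f_1$, $1/f_1f_2$, $f_4/f_1$, together with $f_m^3\equiv f_{3m}\pmod 3$) do carry you through both dissection steps.

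The genuine gap is in the phrase ``after the requisite cancellations.'' The final self-similarity is \emph{not} a matter of cancellation. After the two dissections one arrives (in the paper's normalization) at
\[
\sum_{n\ge 0} pond(27n+17)q^n \equiv 2\frac{f_4^2}{f_1}\left(f_1^8+2qf_4^8\right), \qquad
\sum_{n\ge 0} pond(3n+2)q^n \equiv 2\frac{f_4^2}{f_1}\,f_2^4 \pmod 3,
\]
so the theorem reduces to the identity $f_1^8+2qf_4^8\equiv f_2^4\pmod 3$. This is not a $3$-dissection statement and does not follow from $f_1^3\equiv f_3\pmod 3$ together with the standard dissection lemmas; it mixes $f_1$ and $f_4$ with a single odd power of $q$ and is really a $2$-dissection-type theta identity. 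The paper obtains it as the mod-$3$ reduction of Hirschhorn's exact identity $\dfrac{f_3^3}{f_1}-q\dfrac{f_{12}^3}{f_4}=\dfrac{f_4^3f_6^2}{f_2^2f_{12}}$ (Lemma~\ref{H_22.7.5}), using $f_3^3/f_1\equiv f_1^8$, $f_{12}^3/f_4\equiv f_4^8$, and $f_4^3f_6^2/(f_2^2f_{12})\equiv f_2^4$. Without invoking this identity (or an equivalent one), your plan stalls at precisely the step that makes the theorem true, so you need to add this ingredient explicitly for the argument to close.
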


\begin{theorem}
\label{thm:pend_internal}
For all $n\geq 0,$ $pend(27n+10) \equiv pend(3n+1) \pmod{3}.$     
\end{theorem}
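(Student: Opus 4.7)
The plan is to build directly on the dissection machinery already developed to prove Theorem \ref{thm:pend_cong}. Start from the generating function
$$\sum_{n\geq 0} pend(n)\, q^n \;=\; \frac{(q^2;q^2)_\infty\,(q^{12};q^{12})_\infty}{(q;q)_\infty\,(q^4;q^4)_\infty\,(q^6;q^6)_\infty},$$
obtained by noting that odd parts are unrestricted while each even part has multiplicity either $0$ or $\geq 2$. Applying the congruence $(q^k;q^k)_\infty^3 \equiv (q^{3k};q^{3k})_\infty \pmod 3$, I would reduce this expression modulo 3 into a form whose denominator is a product of $(q^{3k};q^{3k})_\infty$ factors, so that all of the ``mod-3 action'' sits in a numerator built from $(q;q)_\infty$, $(q^2;q^2)_\infty$, and $(q^4;q^4)_\infty$.

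From this reduced form, the first task is to extract a closed expression for $\sum_{n\geq 0}pend(3n+1)\,q^n\pmod 3$. This is essentially what the proof of Theorem \ref{thm:pend_cong} must already accomplish en route to $pend(27n+19)\equiv 0\pmod 3$, since $19\equiv 1\pmod 3$. I expect the output to be of the shape
$$\sum_{n\geq 0} pend(3n+1)\, q^n \;\equiv\; F(q)\, G(q^3) \pmod 3,$$
where $F(q)$ is a simple ratio of $(q^k;q^k)_\infty$'s admitting a known 3-dissection (for instance, via Jacobi's triple product, Euler's pentagonal number theorem, or one of the classical 3-dissection identities for $\varphi(-q)$ or $\psi(q)$), and $G$ collects the factors of $(q^3;q^3)_\infty$-type.

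Next, since $27n+10=9(3n+1)+1$, the coefficient $pend(27n+10)$ lives inside $\sum_{m}pend(9m+1)q^m$ as the subsequence in arithmetic progression $m\equiv 1\pmod 3$. I would therefore apply the classical 3-dissection of $F(q)$, write $F(q)\equiv F_0(q^3)+qF_1(q^3)+q^2F_2(q^3)\pmod 3$, and select the residue-class pieces needed first to read off $\sum_{m\geq 0}pend(9m+1)q^m \pmod 3$ and then to dissect that once more to obtain $\sum_{n\geq 0}pend(27n+10)\,q^n\pmod 3$. The conclusion should then be that this doubly dissected expression collapses, after using $F(q^3)\equiv F(q)^3 \pmod 3$ and similar identities, back to the original $F(q)\,G(q^3)$ representing $\sum pend(3n+1)q^n$ modulo 3.

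The main obstacle I expect is precisely this final matching step. The bookkeeping in a 9-dissection of a product of several $q$-Pochhammer factors can be delicate, and the self-similarity that produces an \emph{internal} congruence (rather than a vanishing one) typically requires recognizing a specific cancellation among the three residue pieces of $F(q)$; this is the same kind of cancellation that forces $pend(27n+19)\equiv 0\pmod 3$. Once Theorem \ref{thm:pend_cong} is in hand, I anticipate that the two ``surviving'' residue pieces in the 9-dissection of $\sum pend(3n+1)q^n\pmod 3$ can be reassembled, via an application of $(q;q)_\infty^3\equiv(q^3;q^3)_\infty$, to reproduce the original generating function for $pend(3n+1)$ modulo 3. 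Matching coefficients of $q^n$ on both sides then yields the claimed internal congruence.
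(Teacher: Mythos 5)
Your roadmap is exactly the route the paper takes: reduce the generating function modulo $3$ via $f_k^3\equiv f_{3k}$, reuse the dissections from the proof of Theorem \ref{thm:pend_cong} to reach $\sum pend(9n+1)q^n$, extract the $q^{3n+1}$ piece to get $\sum pend(27n+10)q^n\equiv \frac{f_2^4f_6^2}{f_3f_4^2}\pmod 3$, and then match this with $\sum pend(3n+1)q^n$ by further applications of the binomial congruence. The plan is sound and the details you leave open are filled in the paper by Lemmas \ref{f1f2}, \ref{f2squared_over_f1}, and \ref{lemma:binthm_divisibility}; the only small imprecision is that just one residue piece of the second dissection (not two) is needed for the internal congruence.
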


Finally, with the above results in hand, we will prove the following infinite families of non--nested Ramanujan--like congruences modulo 3 by induction.  

\begin{theorem}
\label{thm:pond_infinite_family_congs}
For all  $\alpha \geq 1$ and all $n\geq 0,$   
$$
pond\left(3^{2\alpha +1}n+\frac{23\cdot 3^{2\alpha}+1}{8}\right) \equiv 0 \pmod{3}.
$$
\end{theorem}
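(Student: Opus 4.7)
The plan is to prove Theorem \ref{thm:pond_infinite_family_congs} by induction on $\alpha$, using only the previously established Theorems \ref{thm:pond_congs} and \ref{thm:pond_internal}; no additional generating function work should be required. Write $c_\alpha := (23\cdot 3^{2\alpha}+1)/8$ for the shift appearing in the statement. The base case $\alpha = 1$ gives $c_1 = 208/8 = 26$ and $3^{2\alpha+1} = 27$, so the claim reduces to $pond(27n+26) \equiv 0 \pmod{3}$, which is precisely congruence \eqref{pond_mod3}.

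For the inductive step, assume $pond(3^{2\alpha+1}n + c_\alpha) \equiv 0 \pmod{3}$ for every $n\ge 0$, and apply the internal congruence of Theorem \ref{thm:pond_internal}, namely $pond(27m+17) \equiv pond(3m+2) \pmod{3}$, with the substitution
\[
m \;=\; 3^{2\alpha}n + \frac{c_\alpha - 2}{3}.
\]
The first step is to verify that $(c_\alpha-2)/3 = (23\cdot 3^{2\alpha}-15)/24$ is a nonnegative integer for every $\alpha\ge 1$; this follows from the fact that $9^\alpha \equiv 9 \pmod{24}$, so $23\cdot 3^{2\alpha} \equiv 23\cdot 9 \equiv 15 \pmod{24}$. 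With this choice of $m$, a direct computation gives
\[
3m+2 \;=\; 3^{2\alpha+1}n + c_\alpha, \qquad 27m+17 \;=\; 3^{2\alpha+3}n + (9c_\alpha - 1),
\]
and the algebraic identity $9c_\alpha - 1 = c_{\alpha+1}$ is immediate from the definition of $c_\alpha$. Consequently, Theorem \ref{thm:pond_internal} yields
\[
pond\!\left(3^{2\alpha+3}n + c_{\alpha+1}\right) \;\equiv\; pond\!\left(3^{2\alpha+1}n + c_\alpha\right) \;\equiv\; 0 \pmod{3},
\]
where the last congruence is the inductive hypothesis; this completes the step.

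There is no genuine obstacle in this argument: once the internal congruence of Theorem \ref{thm:pond_internal} is in hand, the infinite family is obtained by iteration, and the only point that requires care is the bookkeeping that the affine substitution $m\mapsto 3m+2$, $m\mapsto 27m+17$ maps the arithmetic progression at level $\alpha$ exactly onto the one at level $\alpha+1$. The main conceptual work has already been done in proving the base congruence \eqref{pond_mod3} and the internal congruence, so the inductive argument here simply packages them into an infinite family.
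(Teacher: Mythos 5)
Your proposal is correct and follows essentially the same route as the paper: induction on $\alpha$ with \eqref{pond_mod3} as the base case and Theorem \ref{thm:pond_internal} driving the inductive step via the substitution $m = 3^{2\alpha}n + (c_\alpha-2)/3$, including the same integrality check. The identity $9c_\alpha - 1 = c_{\alpha+1}$ is exactly the paper's computation $\frac{23\cdot 3^{2\alpha+2} - 27\cdot 5 + 17\cdot 8}{8} = \frac{23\cdot 3^{2\alpha+2}+1}{8}$ in slightly cleaner form.
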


\begin{theorem}
\label{thm:pend_infinite_family_congs}
For all  $\alpha \geq 1$ and all $n\geq 0,$   
$$
pend\left(3^{2\alpha +1}n+\frac{17\cdot 3^{2\alpha}-1}{8}\right) \equiv 0 \pmod{3}.
$$
\end{theorem}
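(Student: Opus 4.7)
My plan is to induct on $\alpha$, using Theorem \ref{thm:pend_cong} as the base case and Theorem \ref{thm:pend_internal} as the engine for the inductive step. When $\alpha = 1$, the quantity $3^{2\alpha+1}n + \frac{17\cdot 3^{2\alpha}-1}{8}$ reduces to $27n + 19$, so the base case is exactly Theorem \ref{thm:pend_cong}.

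For the inductive step, I abbreviate $A_\alpha := \frac{17\cdot 3^{2\alpha}-1}{8}$. A direct substitution gives the algebraic relation
$$A_{\alpha+1} = 9A_\alpha + 1,$$
which in turn yields
$$3^{2\alpha+3}n + A_{\alpha+1} \;=\; 9\bigl(3^{2\alpha+1}n + A_\alpha\bigr) + 1.$$
My goal is to recast this as $27M + 10$ so that Theorem \ref{thm:pend_internal} applies. Since $A_\alpha \equiv 1 \pmod 9$ for every $\alpha \geq 1$ (a straightforward check using $3^{2\alpha}\equiv 0 \pmod 9$), the quantity $A_{\alpha+1} - 10 = 9(A_\alpha - 1)$ is divisible by $27$, so I may set $M := 3^{2\alpha}n + \frac{A_{\alpha+1}-10}{27}$, which is a non-negative integer for every $n \geq 0$. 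With this choice, $27M + 10 = 3^{2\alpha+3}n + A_{\alpha+1}$ while $3M + 1 = 3^{2\alpha+1}n + A_\alpha$. Plugging into Theorem \ref{thm:pend_internal} then produces
$$pend\bigl(3^{2\alpha+3}n + A_{\alpha+1}\bigr) \;\equiv\; pend\bigl(3^{2\alpha+1}n + A_\alpha\bigr) \pmod{3},$$
and the inductive hypothesis closes the argument.

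There is no real obstacle here: all of the analytic heavy lifting has already been done in establishing Theorems \ref{thm:pend_cong} and \ref{thm:pend_internal}, and the only remaining work is the bookkeeping identity $A_{\alpha+1} = 9A_\alpha + 1$ together with the divisibility $27 \mid A_{\alpha+1} - 10$, both of which are elementary. The key conceptual point is simply that the internal congruence sends the argument $3n+1 \mapsto 27n+10$, precisely a factor of $9$, which is exactly the factor by which $3^{2\alpha+1}$ grows when $\alpha$ increases by $1$, so the induction closes cleanly.
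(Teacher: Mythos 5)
Your proposal is correct and follows essentially the same route as the paper: induction on $\alpha$ with Theorem \ref{thm:pend_cong} as the base case and the internal congruence of Theorem \ref{thm:pend_internal} driving the inductive step, the only difference being cosmetic bookkeeping (your recursion $A_{\alpha+1}=9A_\alpha+1$ versus the paper's direct rewriting of the argument as $3m+1$ with $m=3^{2\alpha}n+\frac{17\cdot 3^{2\alpha-1}-3}{8}$). All the integrality checks you perform match those in the paper.
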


Section \ref{sec:prelims} is devoted to providing the tools necessary for the remainder of the paper.  In Section \ref{sec:pond}, we prove Theorems \ref{thm:pond_congs}, \ref{thm:pond_internal}, and \ref{thm:pond_infinite_family_congs}.  In Section \ref{sec:pend}, we prove Theorems \ref{thm:pend_cong}, \ref{thm:pend_internal}, and \ref{thm:pend_infinite_family_congs}.
All of the proof techniques used herein are elementary, relying on classical $q$-series identities and generating function manipulations, along with mathematical induction.


\section{Preliminaries}
\label{sec:prelims}

Throughout this work, we will use the following shorthand notation for $q$-Pochhammer symbols:
\begin{align*}
    f_r := (q^r;q^r)_\infty = (1-q^r)(1-q^{2r})(1-q^{3r})\dots
\end{align*}

In order to prove the congruences mentioned above, several important 3--dissections of various $q$--series will be needed.  These results will allow us to write the necessary generating functions in an appropriate fashion. We now catalog these results here.  

\begin{lemma}
\label{f2_over_f1f4}   
We have 
$$
\frac{f_2}{f_1f_4} = \frac{f_{18}^9}{f_3^2f_9^3f_{12}^2f_{36}^3} + q\frac{f_6^2f_{18}^3}{f_3^3f_{12}^3} + q^2\frac{f_6^4f_9^3f_{36}^3}{f_3^4f_{12}^4f_{18}^3}.
$$
\end{lemma}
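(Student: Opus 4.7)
The identity is a pure $q$-series assertion giving the $3$-dissection of $\frac{f_2}{f_1 f_4}$, i.e., a splitting of this infinite product according to the residue class modulo $3$ of the exponent of $q$. The natural plan is to produce this $3$-dissection explicitly by combining $3$-dissections of simpler building blocks and then simplifying.

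First, I would observe that
$$\frac{f_2}{f_1 f_4} = \frac{1}{(q;q^2)_\infty\,(q^4;q^4)_\infty},$$
which is the generating function for partitions into parts $\not\equiv 2\pmod 4$. This rewriting isolates the two "problematic" factors: $1/(q;q^2)_\infty = f_2/f_1$ and $1/f_4$. I would then invoke the standard $3$-dissections of each of these pieces (as recorded, for example, in Hirschhorn's monograph \emph{The Power of $q$}, where the $3$-dissections of $1/f_1$ and $f_2/f_1$ in terms of $f_3$, $f_6$, $f_9$, $f_{18}$ are catalogued, together with the analogous tripling identities for $1/f_4$ in terms of $f_{12}$, $f_{36}$). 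Writing each factor as a three-term sum $A_0(q^3) + q A_1(q^3) + q^2 A_2(q^3)$, I would multiply the two dissections and collect the resulting nine summands by the residue of the exponent of $q$ modulo $3$.

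The final step is to simplify each of the three aggregate component sums to a single infinite product and to match them against the three terms on the right-hand side of the claimed identity. Here I would repeatedly apply elementary tripling identities such as $f_r = f_{3r}\,(q^r,q^{2r};q^{3r})_\infty$ together with Jacobi triple product manipulations, so that the three components collapse to $\frac{f_{18}^9}{f_3^2 f_9^3 f_{12}^2 f_{36}^3}$, $\frac{f_6^2 f_{18}^3}{f_3^3 f_{12}^3}$, and $\frac{f_6^4 f_9^3 f_{36}^3}{f_3^4 f_{12}^4 f_{18}^3}$ respectively.

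The principal obstacle is purely the bookkeeping of the Pochhammer exponents: six different moduli ($3,6,9,12,18,36$) appear in the final expression, and the intermediate dissections each carry their own tower of $f$'s, so the matching step is delicate. A cleaner alternative that I would attempt first is to clear denominators by multiplying through by $f_1 f_4$, expand the resulting identity using Jacobi's triple product so that the left-hand side becomes the theta-style sum representing $f_2$, and then verify equality on each residue class modulo $3$ separately; this typically reduces the problem to two or three named theta-function identities and avoids most of the Pochhammer algebra.
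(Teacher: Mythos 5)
The paper does not actually prove this lemma: it simply cites Toh \cite[Lemma 2.1]{Toh}, so any self-contained argument you give is necessarily a different route. Your overall plan --- write $\frac{f_2}{f_1f_4}=\frac{1}{f_4}\cdot\frac{f_2}{f_1}$, take the $3$-dissection of each factor (that of $1/f_4$ coming from the $3$-dissection of $1/f_1$ under $q\mapsto q^4$, since $4k\bmod 3$ runs over the residues as $k$ does), multiply, and sort the nine products by residue class --- is a legitimate and standard way to derive such identities, and it is close in spirit to how these dissections are obtained in Hirschhorn's book and in Toh's paper.

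However, there is a genuine gap at the decisive step. After multiplying the two three-term dissections, each residue class is a \emph{sum of three distinct eta-quotients} (e.g.\ the class-$0$ component is $A_0B_0+A_1B_2+A_2B_1$), whereas the right-hand side of the lemma has a \emph{single} eta-quotient in each class. The claim that each three-term sum collapses to one infinite product is an identity of essentially the same depth as the lemma itself, and it cannot be reached by ``elementary tripling identities'' such as $f_r=f_{3r}(q^r,q^{2r};q^{3r})_\infty$ or by generic ``Jacobi triple product manipulations'': those rewrite individual products but do not add three products into one. You would need specific addition-type theta identities (for instance the two-term dissection $\frac{f_2^2}{f_1}=\frac{f_6f_9^2}{f_3f_{18}}+q\frac{f_{18}^2}{f_9}$ and its companions, or the $3$-dissections of $\varphi(\pm q)$ and $\psi(q)$) chosen so that the cross terms genuinely telescope; identifying and verifying those is the entire content of the proof and is left unexecuted in your sketch. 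The same objection applies to your alternative of clearing denominators: verifying the resulting identity ``on each residue class separately'' is exactly the problem you started with. As written, the proposal reorganizes the lemma rather than proving it; to complete it you should either carry out the collapse explicitly with named theta identities, or do as the paper does and cite Toh.
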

\begin{proof}
A proof of this identity appears in \cite[Lemma 2.1]{Toh}.    
\end{proof}
\begin{lemma}
\label{f1f2} 
We have 
$$
f_1f_2 = \frac{f_6f_9^4}{f_3f_{18}^2} - qf_{9}f_{18} -2q^2\frac{f_3f_{18}^4}{f_6f_9^2}.
$$
\end{lemma}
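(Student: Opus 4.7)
My plan is to prove Lemma \ref{f1f2} by first establishing the standard 3-dissections of $f_1$ and $f_2$ separately and then multiplying them together. The classical 3-dissection of Euler's function $f_1=(q;q)_\infty$—derivable from Jacobi's triple product, or equivalently from the proof of Ramanujan's ``most beautiful identity''—writes $f_1$ as a sum of three series indexed by the residue of the $q$-exponent modulo $3$, each coefficient being an explicit quotient of theta functions. Replacing $q$ by $q^2$ gives a matching 3-dissection of $f_2$, after reindexing since $2n \bmod 3$ cycles through $0,2,1$.

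Next I would form the product of the two dissections, which yields nine cross-terms, and group them by the residue of their $q$-exponent modulo $3$. The outcome is three series, one for each residue class, which the lemma claims equal $\dfrac{f_6 f_9^4}{f_3 f_{18}^2}$, $-f_9 f_{18}$, and $-2\,\dfrac{f_3 f_{18}^4}{f_6 f_9^2}$ respectively. The coefficient $-2$ in the $q^2$ term is a tell-tale sign that two of the nine cross-terms fall in the $q^2$ residue class and combine, so I would pay particular attention there.

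The main obstacle will be the algebraic simplification that collapses each of the three collected sums into a single clean theta-quotient of the form stated. That step typically requires auxiliary identities among $f_3$, $f_6$, $f_9$, and $f_{18}$, of the sort that come from low-degree modular equations relating $\eta(q)$, $\eta(q^3)$, and $\eta(q^9)$. These manipulations are routine but fiddly, and a misplaced exponent is easy. Consequently, the cleanest proof is likely to mirror the treatment of Lemma \ref{f2_over_f1f4}: rather than re-deriving everything by hand, invoke the identity directly from the $q$-series literature. Variants of this 3-dissection of $f_1 f_2$ are collected in Hirschhorn's monograph \emph{The Power of $q$}, in Berndt's exposition of Ramanujan's notebooks, and in several papers on partition congruences including that of Toh which was cited for Lemma \ref{f2_over_f1f4}.
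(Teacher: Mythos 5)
Your bottom line---invoke the identity from the $q$-series literature rather than re-derive it---is exactly what the paper does: its entire proof of this lemma is a citation to Hirschhorn and Sellers, \emph{J. Integer Sequences} \textbf{17} (2014), Article 14.9.6. Your sketched derivation (multiply the two pentagonal-number 3-dissections of $f_1$ and $f_2$, collect by residue class, and collapse each class into an eta-quotient) is also essentially how that cited source proceeds, so the proposal is consistent with the paper's approach.
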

\begin{proof}
A proof of this identity can be found in \cite{HS_2014_JIS}.
\end{proof}
\begin{lemma}
\label{1_over_f1f2} 
We have 
$$
\frac{1}{f_1f_2} = \frac{f_9^9}{f_3^6f_6^2f_{18}^3} + q\frac{f_9^6}{f_3^5f_6^3} + 3q^2\frac{f_9^3f_{18}^3}{f_3^4f_6^4} -2q^3\frac{f_{18}^6}{f_3^3f_6^5} +4q^4\frac{f_{18}^9}{f_3^2f_6^6f_9^3}.
$$
\end{lemma}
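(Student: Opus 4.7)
The plan is to invert the factorization of $f_1 f_2$ supplied by Lemma \ref{f1f2}. Write
\[
A := \frac{f_6 f_9^4}{f_3 f_{18}^2}, \qquad B := -f_9 f_{18}, \qquad C := -2 \frac{f_3 f_{18}^4}{f_6 f_9^2},
\]
so that $f_1 f_2 = A + qB + q^2 C$ with $A, B, C$ each a power series in $q^3$. The classical algebraic identity
\[
(X+Y+Z)(X^2+Y^2+Z^2-XY-YZ-ZX) = X^3+Y^3+Z^3-3XYZ,
\]
applied with $X = A$, $Y = qB$, $Z = q^2 C$, yields
\[
\frac{1}{f_1 f_2} = \frac{A^2 - qAB + q^2(B^2 - AC) - q^3 BC + q^4 C^2}{A^3 + q^3 B^3 + q^6 C^3 - 3q^3 ABC}.
\]
Denote the denominator by $D$. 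Since $A, B, C$ are each a series in $q^3$, so is $D$, and the right-hand side is already organized as a 3-dissection split by the $q$-exponents $0, 1, 2, 3, 4$ of the numerator, matching the five terms of the statement.

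Each numerator piece is a monomial in $A, B, C$ that simplifies by direct substitution. In particular, $B^2 - AC = f_9^2 f_{18}^2 + 2 f_9^2 f_{18}^2 = 3 f_9^2 f_{18}^2$, which accounts for the coefficient $3$ in the $q^2$ term; the coefficients $-2$ and $4$ in the $q^3$ and $q^4$ terms similarly come from $-BC$ and $C^2$ using the explicit value of $C$.

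The main technical step is evaluating $D$ in closed form. The observation here is that, writing $\omega$ for a primitive cube root of unity,
\[
D = \bigl(A + qB + q^2 C\bigr)\bigl(A + \omega qB + \omega^2 q^2 C\bigr)\bigl(A + \omega^2 qB + \omega q^2 C\bigr),
\]
which is precisely the product of $f_1(q) f_2(q)$ with its two $\omega$-twists. Applying $(1-x)(1-\omega x)(1-\omega^2 x) = 1-x^3$ factorwise to the Pochhammer products yields
\[
\prod_{j=0}^{2} f_1(\omega^j q) = \frac{f_3^4}{f_9}, \qquad \prod_{j=0}^{2} f_2(\omega^j q) = \frac{f_6^4}{f_{18}},
\]
and therefore $D = f_3^4 f_6^4/(f_9 f_{18})$. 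Dividing each of the five numerator monomials by $D$ then collapses the $q$-Pochhammer factors to the five terms appearing in the lemma, as a short bookkeeping check confirms. The only nontrivial step is this closed-form evaluation of $D$; everything else is formal cancellation.
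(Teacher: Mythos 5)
Your proof is correct, but it takes a genuinely different route from the paper: the paper offers no derivation at all, simply citing \cite[Equation (39)]{MG}, whereas you derive the identity from scratch. Your argument starts from the 3--dissection of $f_1f_2$ in Lemma \ref{f1f2}, rationalizes the reciprocal via the norm-form identity $\prod_{j=0}^{2}(X+\omega^j Y+\omega^{2j}Z)=X^3+Y^3+Z^3-3XYZ$, and evaluates the resulting denominator in closed form using $\prod_{j=0}^{2}f_1(\omega^j q)=f_3^4/f_9$ and $\prod_{j=0}^{2}f_2(\omega^j q)=f_6^4/f_{18}$, which gives $D=f_3^4f_6^4/(f_9f_{18})$. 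I have checked the five numerator monomials ($A^2$, $-AB$, $B^2-AC=3f_9^2f_{18}^2$, $-BC$, $C^2$) against $D$, and each quotient reproduces the corresponding term of the lemma, including the coefficients $1,1,3,-2,4$. What your approach buys is self-containedness: the lemma becomes a formal consequence of Lemma \ref{f1f2} (itself cited from \cite{HS_2014_JIS}) rather than an appeal to an external source, at the cost of a page of routine eta-quotient bookkeeping. One cosmetic point: the five terms are indexed by the exponents $0,1,2,3,4$ rather than by residues modulo $3$ (a true 3--dissection would merge the $q^0,q^3$ terms and the $q^1,q^4$ terms), but the lemma is stated in exactly the same five-term form, so the term-by-term match is precisely what is required.
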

\begin{proof}
This lemma is equivalent to \cite[Equation (39)]{MG}.  
\end{proof}
\begin{lemma}
\label{f2squared_over_f1} 
We have 
$$
\frac{f_2^2}{f_1} = \frac{f_6f_9^2}{f_3f_{18}} + q\frac{f_{18}^2}{f_9}.
$$
\end{lemma}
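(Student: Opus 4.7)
The identity is a classical $3$-dissection of Ramanujan's theta function $\psi(q)$, and the plan is to derive it by dissecting the series side and then repackaging the pieces via Jacobi's triple product.

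First, I would recall the standard product-to-series identity
$$
\frac{f_2^2}{f_1} = \psi(q) = \sum_{n\geq 0} q^{n(n+1)/2},
$$
which is an immediate consequence of Jacobi's triple product. Writing $T_n = n(n+1)/2$ for the $n$th triangular number, I would then split this series according to the residue of $n$ modulo $3$.

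A short calculation shows that $T_{3k+1} = 1 + 9k(k+1)/2$, so the middle residue class contributes exactly $q\,\psi(q^9) = q\,f_{18}^2/f_9$, which is already the second term on the right-hand side of the lemma. For the remaining residues, $T_{3k} = 3k(3k+1)/2$, and after the reindexing $k \mapsto -k-1$ the terms with $n = 3k+2$ also contribute $q^{3k(3k+1)/2}$, now for $k \leq -1$. The two pieces therefore fuse into the bilateral theta sum $\sum_{k\in\mathbb{Z}} q^{3k(3k+1)/2}$.

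Next, I would evaluate this bilateral sum. Matching its exponents against Ramanujan's general theta function $f(a,b) = \sum_n a^{n(n+1)/2}b^{n(n-1)/2}$ forces $a=q^6$, $b=q^3$, so by Jacobi's triple product the sum equals
$$
(-q^3;q^9)_\infty(-q^6;q^9)_\infty(q^9;q^9)_\infty.
$$
Using $(-q^a;q^b)_\infty = (q^{2a};q^{2b})_\infty/(q^a;q^b)_\infty$ together with the telescopings $(q^3;q^9)_\infty(q^6;q^9)_\infty = f_3/f_9$ and $(q^6;q^{18})_\infty(q^{12};q^{18})_\infty = f_6/f_{18}$ collapses this product to $f_6 f_9^2/(f_3 f_{18})$, which is the first term on the right-hand side.

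The only real obstacle is the final piece of bookkeeping: one must track the Pochhammer symbols through the triple-product expansion carefully so that the cancellations produce the exact quotient in the statement. The overall $3$-dissection structure is forced by the behavior of triangular numbers modulo $3$, and since the identity is one of the canonical $3$-dissections of $\psi(q)$, an equally acceptable route — consistent with the style of the preceding lemmas in this section — is simply to cite it from a standard $q$-series reference.
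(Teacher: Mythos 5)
Your proposal is correct, and every step checks out: the splitting of the triangular numbers by the residue of $n$ modulo $3$, the computation $T_{3k+1}=1+9k(k+1)/2$ giving $q\,\psi(q^9)=q f_{18}^2/f_9$, the reindexing $k\mapsto -k-1$ that fuses the $n\equiv 0$ and $n\equiv 2$ classes into the bilateral sum $\sum_{k\in\mathbb{Z}}q^{3k(3k+1)/2}=f(q^6,q^3)$, and the triple-product evaluation collapsing to $f_6f_9^2/(f_3f_{18})$ via $(q^3;q^9)_\infty(q^6;q^9)_\infty=f_3/f_9$ and $(q^6;q^{18})_\infty(q^{12};q^{18})_\infty=f_6/f_{18}$. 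The paper itself does none of this: its entire ``proof'' is a citation to Hirschhorn's book, equation (14.3.3) --- exactly the fallback you mention in your last sentence. So your route is genuinely different in that it is a self-contained derivation from Jacobi's triple product rather than an appeal to a reference; what it buys is a reader-verifiable argument in the same elementary spirit the paper advertises, at the cost of about a page of bookkeeping that the author evidently judged unnecessary for a result this standard. Either choice is defensible; if you include the derivation, the only thing to watch is the Pochhammer bookkeeping you already flagged, which you have in fact carried out correctly.
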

\begin{proof}
For a proof of this result, see \cite[(14.3.3)]{H}.      
\end{proof}
\begin{lemma}
\label{f2_over_f1squared} 
We have 
$$
\frac{f_2}{f_1^2} = \frac{f_6^4f_9^6}{f_3^8f_{18}^3} + 2q\frac{f_6^3f_9^3}{f_3^7}+ 4q^2\frac{f_6^2f_{18}^3}{f_3^6}.
$$
\end{lemma}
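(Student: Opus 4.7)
The natural elementary route, given the immediate context of Lemmas~\ref{f2squared_over_f1} and~\ref{1_over_f1f2}, is to exploit the factorization
$$\frac{f_2}{f_1^2} \;=\; \frac{f_2^2}{f_1}\cdot\frac{1}{f_1f_2}$$
and to multiply the known 3-dissections of the two factors together.

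\textbf{Step 1.} Substitute the two-term expression from Lemma~\ref{f2squared_over_f1} and the five-term expression from Lemma~\ref{1_over_f1f2}. This produces ten cross-terms, each of which is an explicit power $q^k$ (with $0\le k\le 5$) multiplied by an eta-quotient in the variables $f_3,f_6,f_9,f_{18}$, i.e., a pure function of $q^3$.

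\textbf{Step 2.} Sort the ten cross-terms by the residue of $k$ modulo $3$, collecting them into a $q^0$-part, a $q^1$-part, and a $q^2$-part. For instance, the $q^0$-part is the sum of three contributions, namely $A\cdot C_0$ together with $q^3(A\cdot C_3 + B\cdot C_2)$, where $A,B$ denote the two terms of Lemma~\ref{f2squared_over_f1} and $C_0,\dots,C_4$ the five terms of Lemma~\ref{1_over_f1f2}; the other two residue classes have similar short sums.

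\textbf{Step 3.} Show that each of the three grouped sums collapses to the single eta-quotient prescribed on the right-hand side: $\frac{f_6^4 f_9^6}{f_3^8 f_{18}^3}$, $2\,\frac{f_6^3 f_9^3}{f_3^7}$, and $4\,\frac{f_6^2 f_{18}^3}{f_3^6}$ respectively.

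\textbf{Main obstacle.} Step~3 is the real work: the multiplication does not terminate in the advertised one-term form automatically, and each residue class requires an auxiliary eta-quotient identity to combine several terms into one. Concretely, one has to verify, for instance, that $\frac{f_9^{11}}{f_3^7 f_6 f_{18}^4}+q^3\frac{f_9^2 f_{18}^5}{f_3^4 f_6^4}=\frac{f_6^4 f_9^6}{f_3^8 f_{18}^3}$, which is itself a nontrivial theta-function identity (essentially the 3-dissection of $1/f_1$). In keeping with the style of the surrounding Lemmas~2.1--2.4, which are all established by one-line citations to Hirschhorn, Toh, or Hirschhorn--Sellers, the cleanest presentation is to bypass this obstacle entirely and simply quote the identity from Hirschhorn's \emph{The Power of q}, where the 3-dissection of $f_2/f_1^2$ appears explicitly; the multiplication argument above then serves as an informal derivation confirming the cited formula.
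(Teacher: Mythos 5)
Your final recommendation---to bypass the computation and simply quote the known 3-dissection of $f_2/f_1^2$ (the overpartition generating function) from the literature---is exactly what the paper does: its entire proof is a one-line citation to Theorem 1 of Hirschhorn and Sellers, \emph{Arithmetic Properties for Overpartitions} (2005). The multiplication sketch you offer as an informal derivation is plausible in outline, but as you yourself note its Step 3 rests on unproved auxiliary theta identities, so in both your write-up and the paper's the citation is doing all the work.
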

\begin{remark}
Note that 
$$
\frac{f_2}{f_1^2} = \sum_{n=0}^\infty \overline{p}(n)q^n
$$
where $\overline{p}(n)$ is the number of overpartitions of $n.$
\end{remark}
\begin{proof}
For a proof of Lemma \ref{f2_over_f1squared}, see \cite[Theorem 1]{HS_2005_JCMCC}.    
\end{proof}
\begin{lemma}
\label{f4_over_f1}
We have    
$$
\frac{f_4}{f_1} = \frac{f_{12}f_{18}^4}{f_3^3f_{36}^2} +q\frac{f_6^2f_9^3f_{36}}{f_3^4f_{18}^2}+2q^2\frac{f_6f_{18}f_{36}}{f_3^3}.
$$
\end{lemma}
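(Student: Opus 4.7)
My plan is to derive Lemma \ref{f4_over_f1} by combining two 3-dissections already in hand via the identity
\[
\frac{f_4}{f_1} \;=\; \frac{f_4^2}{f_2}\cdot\frac{f_2}{f_1 f_4}.
\]
The second factor is already 3-dissected by Lemma \ref{f2_over_f1f4}. For the first factor, I would apply Lemma \ref{f2squared_over_f1} with $q$ replaced by $q^2$, giving
\[
\frac{f_4^2}{f_2} \;=\; \frac{f_{12} f_{18}^2}{f_6 f_{36}} \;+\; q^2\,\frac{f_{36}^2}{f_{18}}.
\]
Each coefficient here is a function of $q^3$ (involving only $f_{6},f_{12},f_{18},f_{36}$) and the coefficient of $q^1$ vanishes, so this is already a 3-dissection, albeit a sparse one.

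Next, I would multiply the two dissections termwise and regroup by the residue of the exponent modulo $3$. Writing Lemma \ref{f2_over_f1f4} as $A_0 + qA_1 + q^2A_2$ and the dissection above as $B_0 + q^2 B_1$, the three components of the product are
\[
D_0 = B_0 A_0 + q^3 B_1 A_1,\qquad D_1 = B_0 A_1 + q^3 B_1 A_2,\qquad D_2 = B_0 A_2 + B_1 A_0,
\]
each a function of $q^3$. The proof then reduces to verifying that $D_0$, $D_1$, and $D_2$ coincide with $\frac{f_{12} f_{18}^4}{f_3^3 f_{36}^2}$, $\frac{f_6^2 f_9^3 f_{36}}{f_3^4 f_{18}^2}$, and $2\,\frac{f_6 f_{18} f_{36}}{f_3^3}$, respectively.

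I expect this matching step to be the main obstacle. After clearing denominators and setting $Q = q^3$, a pleasant simplification occurs: the conditions for $D_0$ and $D_1$ collapse to the \emph{same} eta-quotient identity in $Q$,
\[
\frac{f_1 f_6^7}{f_2 f_3^3 f_4^2 f_{12}^2} \;+\; Q\,\frac{f_2^2 f_{12}^4}{f_4^4 f_6^2} \;=\; 1,
\]
while the condition for $D_2$ is the companion identity whose right-hand side is $2$ rather than $1$. Subtracting the two collapses further to the more compact 2-term relation
\[
\frac{f_3^3 f_{12}}{f_1} \;-\; Q\,\frac{f_{12}^4}{f_4} \;=\; \frac{f_4^3 f_6^2}{f_2^2},
\]
which is a Ramanujan-style theta identity provable by writing both sides in terms of $f(a,b) = \sum a^{n(n+1)/2}b^{n(n-1)/2}$ and invoking Jacobi's triple product, or alternatively by quoting the corresponding 2-dissection from \cite[Chapter 14]{H}, where Lemma \ref{f4_over_f1} itself is recorded. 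Once this auxiliary identity is in place, together with any one of the two companion relations above, all three matching conditions follow by linear combination, and the lemma drops out.
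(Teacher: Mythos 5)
Your factorization $\frac{f_4}{f_1}=\frac{f_4^2}{f_2}\cdot\frac{f_2}{f_1f_4}$ and the termwise multiplication of the two 3--dissections are carried out correctly: writing $F_r=(Q^r;Q^r)_\infty$ with $Q=q^3$, the three matching conditions do reduce, after clearing denominators, to
$$
X+Y=1 \ \ (\text{for both } D_0 \text{ and } D_1), \qquad X+Z=2 \ \ (\text{for } D_2),
$$
where $X=\frac{F_1F_6^7}{F_2F_3^3F_4^2F_{12}^2}$, $Y=Q\,\frac{F_2^2F_{12}^4}{F_4^4F_6^2}$, $Z=\frac{F_2^2F_3^3F_{12}}{F_1F_4^3F_6^2}$, and the difference $Z-Y=1$ is exactly Lemma \ref{H_22.7.5} multiplied through by $\frac{F_2^2F_{12}}{F_4^3F_6^2}$ --- so you could simply cite that lemma rather than re-derive it from the triple product. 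This is also a genuinely different route from the paper, whose proof of Lemma \ref{f4_over_f1} is a bare citation of \cite[Theorem 3.1]{AHS} and \cite[(33.2.6)]{H}.

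However, the argument does not close. The relation $Z-Y=1$ only shows that the condition for $D_0$ (equivalently $D_1$) and the condition for $D_2$ are \emph{equivalent to one another}; to conclude you must still establish one of the companion relations $X+Y=1$ or $X+Z=2$, and you offer neither a proof nor a reference for either --- the phrase ``together with any one of the two companion relations above'' is doing all of the remaining work. This is not a formality: given $Z-Y=1$, each companion relation is, by your own algebra, logically equivalent to Lemma \ref{f4_over_f1} itself, so as written the proposal reduces the lemma to an equivalent unproved identity. You would need an independent argument for, say, $\frac{F_1F_6^7}{F_2F_3^3F_4^2F_{12}^2}+Q\,\frac{F_2^2F_{12}^4}{F_4^4F_6^2}=1$ (for instance by expressing both terms through theta functions and invoking the triple product, the device you deploy only for the already-known subtracted relation), or else fall back on the external references the paper itself uses. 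Until that piece is supplied, the proof is incomplete.
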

\begin{remark}
Note that 
$$
\frac{f_4}{f_1} = \sum_{n=0}^\infty ped(n)q^n
$$
where $ped(n)$ is the number of partitions of $n$ wherein even parts are distinct (as mentioned in the introductory comments above). 
\end{remark}
\begin{proof}
Lemma \ref{f4_over_f1} follows from \cite[Theorem 3.1]{AHS} and \cite[(33.2.6)]{H}.   
\end{proof}
One additional $q$--series identity will be beneficial in the proof of Theorem \ref{thm:pond_internal}.
\begin{lemma}
\label{H_22.7.5}
We have 
$$
\frac{f_3^3}{f_1}-q\frac{f_{12}^3}{f_4} = \frac{f_4^3f_6^2}{f_2^2f_{12}}.
$$
\end{lemma}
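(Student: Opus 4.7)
The identity can be read as a 2-dissection:
$$
\frac{f_3^3}{f_1} \;=\; \frac{f_4^3 f_6^2}{f_2^2 f_{12}} \;+\; q\,\frac{f_{12}^3}{f_4},
$$
asserting that the even-power and odd-power parts of $f_3^3/f_1$ are exactly the two eta-quotients on the right. My plan is to prove the identity in this dissected form.

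First, I would apply Jacobi's triple product identity $f_1^3 = \sum_{n\geq 0}(-1)^n(2n+1)q^{n(n+1)/2}$, with $q$ replaced by $q^3$, to obtain
$$
f_3^3 \;=\; \sum_{n\geq 0}(-1)^n(2n+1)q^{3n(n+1)/2}.
$$
Since $n(n+1)/2$ is even for $n \equiv 0,3 \pmod 4$ and odd for $n \equiv 1,2 \pmod 4$, splitting the sum according to $n \bmod 4$ produces a decomposition $f_3^3 = A(q^2) + q\,B(q^2)$, in which each of $A$ and $B$ can be re-summed as a theta-function product via Jacobi triple product.

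Next, I would combine this with the classical 2-dissection
$$
\frac{1}{f_1} \;=\; \frac{f_8^5}{f_2^5 f_{16}^2} \;+\; q\,\frac{f_4^2 f_{16}^2}{f_2^5 f_8},
$$
multiply the two dissections together, and collect the result into its even and odd parts. This yields explicit (if unwieldy) Pochhammer expressions for the two halves of $f_3^3/f_1$. The final step is to simplify these using standard product manipulations and show that the even half collapses to $f_4^3 f_6^2/(f_2^2 f_{12})$ and the odd half to $f_{12}^3/f_4$.

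The main obstacle is the last step: the intermediate expressions carry Pochhammer symbols of moduli $2, 4, 6, 8, 12, 16, 24$, and reducing these to the clean moduli $2, 4, 6, 12$ on the right-hand side requires a careful sequence of eta-quotient identities. For this reason, an attractive alternative is to bypass the dissection bookkeeping altogether and invoke the identity as entry (22.7.5) of Hirschhorn's \textit{The Power of $q$}, where the same equality is derived directly from a Schr\"oter-type theta-function addition formula, and to cite that derivation. I expect either route to work, but the Schr\"oter-based proof is by far the more economical.
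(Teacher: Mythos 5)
Your fallback option---invoking the identity as entry (22.7.5) of Hirschhorn's \textit{The Power of $q$}---is precisely what the paper does: its entire proof of Lemma \ref{H_22.7.5} consists of that citation. So on that route you and the author agree, and there is nothing further to check.

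However, your assertion that the direct dissection route would ``also work'' is not right as sketched, for two concrete reasons. First, the formula you quote as the classical 2-dissection of $1/f_1$ is false: the product $\frac{f_8^5}{f_2^5 f_{16}^2}$ begins $1+5q^2+20q^4+\cdots$, whereas the even part of $1/f_1$ is $\sum_{n\ge 0} p(2n)q^{2n}=1+2q^2+5q^4+\cdots$. What you have written is, up to the missing factor of $2$ in the second term, the well-known 2-dissection of $1/f_1^2$, namely $\frac{1}{f_1^2}=\frac{f_8^5}{f_2^5f_{16}^2}+2q\frac{f_4^2f_{16}^2}{f_2^5f_8}$; the function $1/f_1$ itself admits no dissection of this shape, since $\sum p(2n)q^n$ is not an eta-quotient. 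Second, after splitting $f_3^3=\sum_{n\ge 0}(-1)^n(2n+1)q^{3n(n+1)/2}$ according to $n\bmod 4$, the two sub-series retain the linear weights $\pm(2n+1)$ (for instance, the even part is the bilateral sum $\sum_{m\in\mathbb{Z}}(8m+1)q^{24m^2+6m}$), and weighted sums of this kind cannot be ``re-summed as a theta-function product via Jacobi triple product''; the triple product only applies to series with coefficients $\pm 1$, so identifying these pieces with products would itself require a separate, nontrivial identity. A correct elementary derivation would instead start from genuine 2-dissections (e.g., of $f_1f_3$ or $f_3/f_1$) or, as you suggest and as the paper does, simply cite Hirschhorn's Schr\"oter-type derivation.
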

\begin{proof}
This identity appears in \cite[(22.7.5)]{H}.      
\end{proof}
Lastly, we will utilize the following result which, at its core, relies on the binomial theorem and the divisibility properties of various binomial coefficients. 
\begin{lemma}
\label{lemma:binthm_divisibility}
For all primes $p$ and all $j,k,m\geq 1$, 
$f_{m}^{p^j k} \equiv f_{pm}^{p^{j-1}k} \pmod{p^j}$.
\end{lemma}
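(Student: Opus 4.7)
The plan is to prove the lemma by induction on $j$, with the main engine being the standard ``lifting the exponent'' consequence of the binomial theorem: if $x \equiv y \pmod{p^j}$ in $\mathbb{Z}[[q]]$, then $x^p \equiv y^p \pmod{p^{j+1}}$.

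First I would dispose of the base case $j=1$. Applying the binomial theorem to each factor of $f_m = \prod_{n\geq 1}(1-q^{mn})$, one has $(1-q^{mn})^p = \sum_{i=0}^{p}\binom{p}{i}(-q^{mn})^i \equiv 1 - q^{pmn} \pmod{p}$, since $p \mid \binom{p}{i}$ for $1 \leq i \leq p-1$. Taking the product over $n$ gives $f_m^p \equiv f_{pm} \pmod{p}$, and raising both sides to the $k$-th power yields the $j=1$ case.

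For the induction step, suppose $f_m^{p^j k} \equiv f_{pm}^{p^{j-1} k} \pmod{p^j}$. Write $f_m^{p^j k} = f_{pm}^{p^{j-1} k} + p^j g(q)$ for some $g(q) \in \mathbb{Z}[[q]]$, and raise to the $p$-th power:
\begin{align*}
f_m^{p^{j+1} k} \;=\; \bigl(f_{pm}^{p^{j-1} k} + p^j g\bigr)^p \;=\; \sum_{i=0}^{p} \binom{p}{i}\, f_{pm}^{(p-i)p^{j-1} k}\, (p^j g)^i.
\end{align*}
The $i=0$ term is $f_{pm}^{p^j k}$, exactly what we want. For $1 \leq i \leq p-1$, the factor $\binom{p}{i}$ contributes one power of $p$ and $(p^j)^i$ contributes $ji \geq j$ more, so each such term is divisible by $p^{ji+1} \geq p^{j+1}$. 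The $i=p$ term carries $(p^j)^p = p^{jp}$, and $jp \geq 2j \geq j+1$ for $j \geq 1$. Hence all terms with $i \geq 1$ vanish modulo $p^{j+1}$, giving $f_m^{p^{j+1} k} \equiv f_{pm}^{p^j k} \pmod{p^{j+1}}$, which closes the induction.

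There is not really a ``main obstacle'' here; the entire proof is a book-keeping exercise on binomial coefficients. The only point that requires any care is verifying that in the induction step every cross term $\binom{p}{i}(p^j g)^i$ is divisible by $p^{j+1}$ — in particular one must treat the endpoint $i=p$ separately, since $\binom{p}{p}=1$ contributes no $p$-divisibility and one must instead rely on $p^{jp}$ being at least $p^{j+1}$, which uses $j \geq 1$. Once this is checked, the result follows cleanly.
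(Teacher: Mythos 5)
Your proof is correct, and it is exactly the standard binomial-theorem-plus-induction argument that the paper alludes to; the paper itself states this lemma without writing out a proof, noting only that it ``relies on the binomial theorem and the divisibility properties of various binomial coefficients.'' Your write-up supplies that omitted argument in full, including the one point that genuinely needs care (the $i=p$ term in the induction step, where divisibility comes from $p^{jp}$ rather than from $\binom{p}{i}$).
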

With all of these tools in hand, we are now in a position to prove the theorems listed above. 


\section{Congruences for $pond(n)$}
\label{sec:pond}

We begin by considering the function $pond(n).$  Although one can derive the generating function for $pond(n)$ from the work of Ballantine and Welch \cite{BalWel}, we provide a proof of the result here for the sake of completeness.  

\begin{theorem}
\label{thm:pond_genfn}
We have
$$
\sum_{n=0}^{\infty} pond(n)q^n =\frac{f_4f_6^2}{f_2^2f_3f_{12}}.
$$
\end{theorem}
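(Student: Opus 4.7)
The plan is to build the generating function for $pond(n)$ factor by factor, separating the contributions of even and odd parts, and then simplifying the resulting infinite product into the claimed form.

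First I would observe that a POND partition of $n$ decomposes uniquely into its multiset of even parts (which is unrestricted) and its multiset of odd parts (where each odd part used must appear at least twice). The even parts contribute the standard factor
$$
\prod_{k\geq 1}\frac{1}{1-q^{2k}} = \frac{1}{f_2}.
$$
For each odd integer $m$, the multiplicity of $m$ must be $0$ or at least $2$, so its contribution is
$$
1+q^{2m}+q^{3m}+q^{4m}+\cdots = 1+\frac{q^{2m}}{1-q^m} = \frac{1-q^m+q^{2m}}{1-q^m}.
$$

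Next I would simplify the odd-part factor using the algebraic identity $1-q^m+q^{2m}=\dfrac{1+q^{3m}}{1+q^m}$, which gives
$$
\prod_{m\geq 1,\ m\text{ odd}} \frac{1-q^m+q^{2m}}{1-q^m}
= \prod_{m\geq 1,\ m\text{ odd}} \frac{1+q^{3m}}{(1+q^m)(1-q^m)}
= \prod_{m\geq 1,\ m\text{ odd}} \frac{1+q^{3m}}{1-q^{2m}}.
$$
To convert these restricted products into $f_r$ notation, I would use the standard splittings
$$
\prod_{k\geq 1}(1-q^{2k}) = \Bigl(\prod_{m\text{ odd}}(1-q^{2m})\Bigr)\Bigl(\prod_{k\text{ even}}(1-q^{2k})\Bigr),
\qquad
\prod_{k\geq 1}(1+q^{3k}) = \Bigl(\prod_{m\text{ odd}}(1+q^{3m})\Bigr)\Bigl(\prod_{k\text{ even}}(1+q^{3k})\Bigr),
$$
together with the elementary identity $\prod_{k\geq 1}(1+q^{rk}) = f_{2r}/f_r$. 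The first gives $\prod_{m\text{ odd}}(1-q^{2m}) = f_2/f_4$, and the second gives
$$
\prod_{m\text{ odd}}(1+q^{3m}) = \frac{f_6/f_3}{f_{12}/f_6} = \frac{f_6^2}{f_3 f_{12}}.
$$

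Combining everything, the odd-part contribution becomes $\dfrac{f_4 f_6^2}{f_2 f_3 f_{12}}$, and multiplying by the even-part factor $1/f_2$ yields
$$
\sum_{n\geq 0} pond(n)q^n = \frac{f_4 f_6^2}{f_2^2 f_3 f_{12}},
$$
as desired. There is no serious obstacle here; the only place one needs to be careful is in correctly performing the restriction of the products to odd indices and keeping the exponents in $f_r$ straight, which the identity $\prod_{k}(1+q^{rk})=f_{2r}/f_r$ handles cleanly.
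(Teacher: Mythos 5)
Your proposal is correct and follows essentially the same route as the paper: both express the odd-part factor as $\frac{1-q^m+q^{2m}}{1-q^m}$, rewrite the numerator as $\frac{1+q^{3m}}{1+q^m}$, and then convert the resulting odd-indexed products into $f_r$ quotients. The only difference is notational (you split the products by parity directly, while the paper passes through $(-q^3;q^6)_\infty/(q^2;q^4)_\infty$), and all your conversions such as $\prod_{m\ \mathrm{odd}}(1+q^{3m})=f_6^2/(f_3f_{12})$ check out.
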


\begin{proof}
By definition, 
\begin{align*}
\sum_{n=0}^\infty pond(n)q^n 
&= 
\frac{1}{f_2}\prod_{i=1}^\infty \left( \frac{1}{1-q^{2i-1}} - q^{2i-1} \right) \\
&= 
\frac{1}{f_2}\prod_{i=1}^\infty \left( \frac{1-q^{2i-1}+q^{4i-2}}{1-q^{2i-1}}  \right) \\
&= 
\frac{1}{f_2}\prod_{i=1}^\infty \left( \frac{1+q^{6i-3}}{(1+q^{2i-1})(1-q^{2i-1})}  \right) \\
&= 
\frac{1}{f_2}\cdot \frac{(-q^3;q^6)_\infty}{(q^2;q^4)_\infty} \\
&= 
\frac{1}{f_2}\cdot \frac{f_4}{f_2}\cdot \frac{(q^6;q^{12})_\infty}{(q^3;q^{6})_\infty} \\
&= 
\frac{f_4}{f_2^2}\cdot \frac{f_6}{f_{12}}\cdot \frac{f_6}{f_3} \\
&= 
\frac{f_4f_6^2}{f_2^2f_3f_{12}}.
\end{align*}
\end{proof}
We can now move to a proof of Theorem \ref{thm:pond_congs}.  

\begin{proof}(of Theorem \ref{thm:pond_congs})
Our first goal is to 3--dissect the generating function for $pond(n)$.  Note that 
\begin{align*}
 \sum_{n=0}^{\infty} pond(n)q^n 
 &=
 \frac{f_4f_6^2}{f_2^2f_3f_{12}}   \\
 &= 
 \frac{f_4}{f_2^2}\cdot \frac{f_6^2}{f_3f_{12}}   \\
 &= 
 \left( \frac{f_{12}^4f_{18}^6}{f_6^8f_{36}^3} + 2q^2\frac{f_{12}^3f_{18}^3}{f_6^7}+ 4q^4\frac{f_{12}^2f_{36}^3}{f_6^6} \right)\cdot \frac{f_6^2}{f_3f_{12}} 
\end{align*}
thanks to Lemma \ref{f2_over_f1squared}.  This means we know the following:  
\begin{align*}
\sum_{n=0}^\infty pond(3n)q^{3n} 
&= 
\frac{f_6^2}{f_3f_{12}}\cdot \frac{f_{12}^4f_{18}^6}{f_6^8f_{36}^3}, \\
\sum_{n=0}^\infty pond(3n+1)q^{3n+1} 
&= 
\frac{f_6^2}{f_3f_{12}}\cdot 4q^4\frac{f_{12}^2f_{36}^3}{f_6^6}, \textrm{\ \ \ and} \\
\sum_{n=0}^\infty pond(3n+2)q^{3n+2} 
&= 
\frac{f_6^2}{f_3f_{12}}\cdot 2q^2\frac{f_{12}^3f_{18}^3}{f_6^7}.    
\end{align*}
This is equivalent to the following 3--dissection for the generating function for $pond(n)$:
\begin{align}
\sum_{n=0}^\infty pond(3n)q^{n} 
&= 
\frac{f_2^2}{f_1f_4}\cdot \frac{f_4^4f_6^6}{f_2^8f_{12}^3} = \frac{f_4^3f_6^6}{f_1f_2^6f_{12}^3}, \label{pond3n0_genfn}\\
\sum_{n=0}^\infty pond(3n+1)q^{n} 
&= 
4q\frac{f_2^2}{f_1f_{4}}\cdot \frac{f_{4}^2f_{12}^3}{f_2^6} = 4q\frac{f_4f_{12}^3}{f_1f_2^4}, \textrm{\ \ \ and} \label{pond3n1_genfn} \\
\sum_{n=0}^\infty pond(3n+2)q^{n} 
&= 
2\frac{f_2^2}{f_1f_{4}}\cdot \frac{f_{4}^3f_{6}^3}{f_2^7}  = 2\frac{f_4^2f_6^3}{f_1f_2^5}.   \label{pond3n2_genfn} 
\end{align}
We pause here to note that (\ref{pond3n2_genfn}) implies  (\ref{pond_mod2}) while (\ref{pond3n1_genfn}) implies (\ref{pond_mod4}).  Thus, in order to complete the proof of Theorem \ref{thm:pond_congs}, we simply need to prove (\ref{pond_mod3}), and this requires us to 3--dissect the generating function for $pond(3n+2)$ which appears in (\ref{pond3n2_genfn}):  
\begin{align*}
&
\sum_{n=0}^\infty pond(3n+2)q^{n} \notag\\
&= 
2\frac{f_4^2f_6^3}{f_1f_2^5}  \notag \\
&\equiv 
2\frac{f_4^2f_2^9}{f_1f_2^5} \pmod{3} \textrm{\ \ thanks to Lemma \ref{lemma:binthm_divisibility}}  \notag \\
&= 
2\frac{f_4^2f_2^4}{f_1} \\ 
&= 
2(f_2f_4)^3\cdot \frac{f_2^2}{f_1}\cdot \frac{1}{f_2f_4} \notag \\
&\equiv 
2f_6f_{12}\cdot \frac{f_2^2}{f_1}\cdot \frac{1}{f_2f_4} \pmod{3}  \notag \\
&\equiv
2f_6f_{12}\left( \frac{f_6f_9^2}{f_3f_{18}} + q\frac{f_{18}^2}{f_9} \right)   \\
& \ \ \ \ \ \ \ \ \ \ \times \left( \frac{f_{18}^9}{f_6^6f_{12}^2f_{36}^3} + q^2\frac{f_{18}^6}{f_6^5f_{12}^3} + q^6\frac{f_{36}^6}{f_6^3f_{12}^5} +q^8\frac{f_{36}^9}{f_6^2f_{12}^6f_{18}^3} \right)  \pmod{3} \notag \\
\end{align*}
thanks to Lemmas \ref{1_over_f1f2} and \ref{f2squared_over_f1}.  
Thus, we know 
\begin{align*}
\sum_{n=0}^\infty pond(9n+8)q^{3n+2} 
&\equiv 
2f_6f_{12}\cdot \frac{f_6f_9^2}{f_3f_{18}} \left( q^2\frac{f_{18}^6}{f_6^5f_{12}^3} +q^8\frac{f_{36}^9}{f_6^2f_{12}^6f_{18}^3} \right)  \pmod{3}.
\end{align*}
Therefore, 
\begin{align*}
\sum_{n=0}^\infty pond(9n+8)q^{n} 
&\equiv 
2\frac{f_2^2f_3^2f_4}{f_1f_{6}} \left( \frac{f_{6}^6}{f_2^5f_{4}^3}  +q^2\frac{f_{12}^9}{f_2^2f_{4}^6f_{6}^3} \right)  \pmod{3} \\
&=
2\frac{f_3^2}{f_1f_4^2} \left( \frac{f_{6}^5}{f_2^3}  +q^2\frac{f_{12}^9}{f_{4}^3f_{6}^4} \right)  \\
&\equiv 
2\frac{f_3^2}{f_1f_4^2} \left( \frac{f_{6}^5}{f_6}  +q^2\frac{f_{12}^9}{f_{12}f_{6}^4} \right)  \pmod{3} \textrm{\ \ \ using Lemma \ref{lemma:binthm_divisibility}} \\
&= 
2\frac{f_3^2f_4}{f_1f_4^3} \left( f_{6}^4  +q^2\frac{f_{12}^8}{f_{6}^4} \right) \\
&\equiv 
2\frac{f_4}{f_1}\cdot \frac{f_3^2}{f_{12}} \left( f_{6}^4  +q^2\frac{f_{12}^8}{f_{6}^4} \right)  \pmod{3}.  
\end{align*}
We now use Lemma \ref{f4_over_f1} to see that 
\begin{align*}
\sum_{n=0}^\infty pond(27n+26)q^{3n+2} 
&\equiv 
2\frac{f_3^2}{f_{12}} \left( q^2\frac{f_{12}^8}{f_{6}^4}\cdot \frac{f_{12}f_{18}^4}{f_3^3f_{36}^2} + 2q^2\frac{f_6^5f_{18}f_{36}}{f_3^3} \right)  \pmod{3}
\end{align*}
so that 
\begin{align*}
\sum_{n=0}^\infty pond(27n+26)q^{n} 
&\equiv 
2\frac{f_1^2}{f_{4}} \left( \frac{f_{4}^9f_6^4}{f_{2}^4f_1^3f_{12}^2} + 2\frac{f_2^5f_{6}f_{12}}{f_1^3} \right)  \pmod{3} \\
&= 
\frac{f_2^5}{f_1f_4}\left( 2\frac{f_4^9f_6^4}{f_2^9f_{12}^2} + 4f_6f_{12} \right) \\
&\equiv 
\frac{f_2^5}{f_1f_4}\left( 2\frac{f_{12}^3f_6^4}{f_6^3f_{12}^2} + 4f_6f_{12} \right) \pmod{3}\\
&\equiv 
\frac{f_2^5}{f_1f_4}\left( 6f_6f_{12} \right) \pmod{3}\\
&\equiv 
0 \pmod{3}.  
\end{align*}
This completes the proof of (\ref{pond_mod3}) and, therefore, Theorem \ref{thm:pond_congs}.
\end{proof}
Equation (\ref{pond_mod3})  will serve as the base case for the proof by induction of Theorem \ref{thm:pond_infinite_family_congs}.  However, before we turn to the proof of Theorem \ref{thm:pond_infinite_family_congs}, we first prove Theorem \ref{thm:pond_internal} which will be the ``engine'' for that proof by induction.  
\begin{proof}(of Theorem \ref{thm:pond_internal})
Our goal is to prove that, for all $n\geq 0,$ $$pond(27n+17) \equiv pond(3n+2) \pmod{3}.$$  
From our work above, we know 
\begin{equation}
\label{3n2_mod3}
\sum_{n=0}^\infty pond(3n+2)q^n \equiv 2\frac{f_4^2}{f_1}(f_2^4) \pmod{3}. 
\end{equation}
Next, we need to determine a corresponding congruence for the generating function for $pond(27n+17).$  In our earlier work, we showed that 
$$
\sum_{n=0}^\infty pond(9n+8)q^n 
\equiv 
2\frac{f_4}{f_1}\cdot \frac{f_3^2}{f_{12}} \left( f_{6}^4  +q^2\frac{f_{12}^8}{f_{6}^4} \right)  \pmod{3}.  
$$
We can then use Lemma \ref{f4_over_f1} to see that 
\begin{align*}
& \sum_{n=0}^\infty pond(9(3n+1)+8) q^{3n+1} \\
&\equiv  2 \frac{f_3^2}{f_{12}} \left( f_{6}^4\cdot q\frac{f_6^2f_9^3f_{36}}{f_3^4f_{18}^2}  +q^2\frac{f_{12}^8}{f_{6}^4}\cdot 2q^2\frac{f_6f_{18}f_{36}}{f_3^3} \right)   \pmod{3}
\end{align*}
or 
\begin{align}
\sum_{n=0}^\infty pond(27n+17) q^{n} 
&\equiv  
2 \frac{f_1^2}{f_{4}} \left(  \frac{f_2^6f_3^3f_{12}}{f_1^4f_{6}^2}  +2q\frac{f_4^8f_{6}f_{12}}{f_1^3f_2^3} \right)   \pmod{3} \notag\\
&\equiv  
2 \frac{f_2^6f_3^3f_{12}}{f_1^2f_4f_6^2} + 4q \frac{f_4^7f_6f_{12}}{f_1f_2^3}   \pmod{3} \notag\\
&\equiv  
2 \frac{f_2^6f_1^9f_{4}^3}{f_1^2f_4f_2^6} + 4q \frac{f_4^7f_2^3f_{4}^3}{f_1f_2^3}   \pmod{3} \notag\\
&\equiv  
2 f_1^7f_{4}^2 + 4q \frac{f_4^{10}}{f_1}   \pmod{3} \notag\\
&= 
2\frac{f_4^2}{f_1}\left(f_1^8+2qf_4^8\right). \label{27n17_mod3}
\end{align}
Therefore, in order to prove this theorem, we know from (\ref{3n2_mod3}) and (\ref{27n17_mod3}) that we must show the following:  
$$
2\frac{f_4^2}{f_1}\left(f_1^8+2qf_4^8\right) \equiv 2\frac{f_4^2}{f_1}(f_2^4) \pmod{3}
$$
or 
$$
f_1^8+2qf_4^8 \equiv f_2^4\pmod{3}.
$$
To complete this proof, we are reminded of Lemma \ref{H_22.7.5}:  
$$
\frac{f_3^3}{f_1}-q\frac{f_{12}^3}{f_4} = \frac{f_4^3f_6^2}{f_2^2f_{12}}.
$$
Note that this implies that 
$$
\frac{f_1^9}{f_1}+2q\frac{f_{4}^9}{f_4} \equiv \frac{f_{12}f_2^6}{f_2^2f_{12}} \pmod{3}
$$
or
$$
f_1^8+2qf_{4}^8 \equiv f_2^4 \pmod{3}
$$
which is the desired result.  
\end{proof}
With Theorems \ref{thm:pond_congs} and \ref{thm:pond_internal} in hand, we can now turn to proving the infinite family of Ramanujan--like congruences modulo 3 satisfied by $pond(n)$. 

\begin{proof}(of Theorem \ref{thm:pond_infinite_family_congs}) 
We prove this theorem by induction on $\alpha$.  Note that the base case, $\alpha=1$, which corresponds to the arithmetic progression 
$$
3^3n + \frac{23\cdot 3^{2}+1}{8} = 27n+26,
$$ 
has already been proved in Theorem \ref{thm:pond_congs} above.   
Thus, we assume that, for some $\alpha\geq 1$ and all $n\geq 0$, 
$$
pond\left(3^{2\alpha +1}n+\frac{23\cdot 3^{2\alpha}+1}{8}\right) \equiv 0 \pmod{3}.
$$
We then want to prove that 
$$
pond\left(3^{2\alpha +3}n+\frac{23\cdot 3^{2\alpha + 2}+1}{8}\right) \equiv 0 \pmod{3}.
$$
Note that 
\begin{align*}
 3^{2\alpha +1}n+\frac{23\cdot 3^{2\alpha}+1}{8} 
 &= 
 3\left( 3^{2\alpha }n \right) +\frac{23\cdot 3^{2\alpha}-15+16}{8} \\
 &= 
 3\left( 3^{2\alpha }n \right) +3\left( \frac{23\cdot 3^{2\alpha -1}-5}{8}\right) + 2 \\
 &= 
 3\left( 3^{2\alpha }n  + \frac{23\cdot 3^{2\alpha -1}-5}{8}\right) + 2
\end{align*}
and it is easy to argue that 
$$3^{2\alpha }n  + \frac{23\cdot 3^{2\alpha -1}-5}{8}$$
is an integer for any $\alpha \geq 1$.  Therefore, we have the following:  
\begin{align*}
 &
 pond\left( 3^{2\alpha +1}n+\frac{23\cdot 3^{2\alpha}+1}{8} \right) \\
 &= 
 pond\left( 3\left( 3^{2\alpha }n  + \frac{23\cdot 3^{2\alpha -1}-5}{8}\right) + 2 \right) \\
 &\equiv 
 pond\left( 27\left( 3^{2\alpha }n  + \frac{23\cdot 3^{2\alpha -1}-5}{8}\right) + 17 \right) \pmod{3} \textrm{\ \ \ thanks to Theorem \ref{thm:pond_internal}} \\
 &= 
 pond\left(  3^{2\alpha +3}n  + \frac{23\cdot 3^{2\alpha +2}-27\cdot 5 +17\cdot 8}{8}  \right) \\
 &= 
 pond\left(  3^{2\alpha +3}n  + \frac{23\cdot 3^{2\alpha +2}+1}{8}  \right) \\
 &\equiv 
 0 \pmod{3}
\end{align*}
thanks to the induction hypothesis.  This completes the proof.
\end{proof}


\section{Congruences for $pend(n)$}
\label{sec:pend}
We now turn our attention to proving Theorems \ref{thm:pend_cong}, \ref{thm:pend_internal}, and \ref{thm:pend_infinite_family_congs}.
We begin by finding the generating function for $pend(n).$
\begin{theorem}
\label{thm:pend_genfn}
We have
$$
\sum_{n=0}^{\infty} pend(n)q^n =\frac{f_2f_{12}}{f_1f_4f_6}.
$$
\end{theorem}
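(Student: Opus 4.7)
The plan is to mirror the derivation used for $pond(n)$ in Theorem \ref{thm:pond_genfn}, but with the roles of odd and even parts swapped. I would split the generating function as a product of an ``odd part'' factor (in which odd parts are unrestricted) and an ``even part'' factor (in which no even part may appear with multiplicity exactly one).

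First, the unrestricted odd-part factor is the standard
$$
\frac{1}{(q;q^2)_\infty} = \frac{f_2}{f_1}.
$$
For each fixed even part $2i$, the admissible multiplicities are $0$ or $\geq 2$, so the local factor is
$$
\frac{1}{1-q^{2i}} - q^{2i} = \frac{1 - q^{2i} + q^{4i}}{1-q^{2i}} = \frac{1+q^{6i}}{(1+q^{2i})(1-q^{2i})},
$$
exactly as in the POND calculation but with $2i-1$ replaced by $2i$. Taking the product over all $i \geq 1$ then gives the even-part factor
$$
\frac{(-q^6;q^6)_\infty}{(-q^2;q^2)_\infty\,(q^2;q^2)_\infty}.
$$

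Next, I would convert the two $(-q^k;q^k)_\infty$ expressions using the standard identity $(-q^k;q^k)_\infty = f_{2k}/f_k$ (equivalently $(q^k;q^{2k})_\infty = f_k/f_{2k}$), obtaining $(-q^2;q^2)_\infty = f_4/f_2$ and $(-q^6;q^6)_\infty = f_{12}/f_6$. Substituting yields
$$
\frac{f_{12}/f_6}{(f_4/f_2)\, f_2} = \frac{f_{12}}{f_4 f_6}.
$$
Multiplying this even-part factor by the odd-part factor $f_2/f_1$ gives the claimed $\dfrac{f_2 f_{12}}{f_1 f_4 f_6}$.

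The argument is essentially a routine bookkeeping exercise in $q$-Pochhammer manipulations, so there is no real obstacle; the one place to be careful is the step $1 - q^{2i} + q^{4i} = (1+q^{6i})/(1+q^{2i})$, which is the key algebraic identity that lets the even-part product collapse into a neat quotient of $f_r$'s. Once that is in place, the rest is just applying $(-q^k;q^k)_\infty = f_{2k}/f_k$ and simplifying.
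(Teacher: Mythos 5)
Your proposal is correct and follows essentially the same route as the paper: the same splitting into the unrestricted odd-part factor $f_2/f_1$ and the local even-part factors $\frac{1}{1-q^{2i}}-q^{2i}$, the same factorization $1-q^{2i}+q^{4i}=\frac{1+q^{6i}}{1+q^{2i}}$, and the same Pochhammer conversions. The only cosmetic difference is that the paper collapses $\prod_i(1+q^{2i})(1-q^{2i})$ directly to $f_4$ rather than routing through $(-q^2;q^2)_\infty=f_4/f_2$.
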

\begin{proof}
Using the definition of the partitions counted by $pend(n)$, we 
know
\begin{align*}
\sum_{n=0}^\infty pend(n)q^n 
&= 
\frac{1}{(q;q^2)_\infty}\prod_{i=1}^\infty \left( \frac{1}{1-q^{2i}} - q^{2i} \right) \\
&= 
\frac{f_2}{f_1}\prod_{i=1}^\infty \left( \frac{1-q^{2i}+q^{4i}}{1-q^{2i}}  \right) \\
&= 
\frac{f_2}{f_1}\prod_{i=1}^\infty \left( \frac{1+q^{6i}}{(1+q^{2i})(1-q^{2i})}  \right) \\
&= 
\frac{f_2}{f_1} \frac{(-q^6;q^6)_\infty}{f_4} \\
&= 
\frac{f_2}{f_1}\cdot \frac{f_{12}}{f_4f_6} \\
&= 
\frac{f_2f_{12}}{f_1f_4f_6}.
\end{align*}
\end{proof}

We now turn our attention to proving Theorem \ref{thm:pend_cong}.  This will require that we 3--dissect the generating function for $pend(n)$ in a particular way.   

\begin{proof}(of Theorem \ref{thm:pend_cong})
Thanks to Theorem \ref{thm:pend_genfn}, we see that 
\begin{align*}
\sum_{n=0}^{\infty} pend(n)q^n 
&=
\frac{f_2f_{12}}{f_1f_4f_6} \\
&\equiv  
\frac{f_4^2}{f_1f_2^2} \pmod{3} \textrm{\ \ from Lemma \ref{lemma:binthm_divisibility}} \\
&= 
\frac{f_4^3}{f_2^3} \cdot \frac{f_2}{f_1f_4} \\
&\equiv 
\frac{f_{12}}{f_6} \cdot \frac{f_2}{f_1f_4} \pmod{3}.
\end{align*}
From Lemma \ref{f2_over_f1f4}, we then know that 
$$
\sum_{n=0}^\infty pend(3n+1)q^{3n+1} \equiv \frac{f_{12}}{f_6} \left( q\frac{f_6^2f_{18}^3}{f_3^3f_{12}^3} \right) \pmod{3}
$$ 
which means 
\begin{align}
\sum_{n=0}^\infty pend(3n+1)q^{n} 
&\equiv 
\frac{f_{4}}{f_2} \cdot \frac{f_2^2f_{6}^3}{f_1^3f_{4}^3} \pmod{3} \notag \\
&= 
\frac{f_{2}f_6^3}{f_1^3f_4^2} \notag \\
&\equiv 
\left(  f_2f_4 \right) \frac{f_6^3}{f_3f_{12}} \pmod{3} \label{3n1_genfn}.  
\end{align}
Thanks to Lemma \ref{f1f2}, we see that 
$$
\sum_{n=0}^\infty pend(3n+1)q^{n} 
\equiv \frac{f_6^3}{f_3f_{12}} \left( \frac{f_{12}f_{18}^4}{f_6f_{36}^2} +2q^2f_{18}f_{36} +q^4\frac{f_6f_{36}^4}{f_{12}f_{18}^2} \right) \pmod{3}.
$$
This now allows us to perform an additional 3--dissection to obtain 
$$
\sum_{n=0}^\infty pend(9n+1)q^{3n} 
\equiv \frac{f_6^3}{f_3f_{12}} \cdot  \frac{f_{12}f_{18}^4}{f_6f_{36}^2}   \pmod{3}
$$
which yields 
\begin{align*}
\sum_{n=0}^\infty pend(9n+1)q^{n} 
&\equiv
\frac{f_2^2f_6^4}{f_1f_{12}^2} \pmod{3} \\
&= 
\frac{f_2^2}{f_1}\cdot \frac{f_6^4}{f_{12}^2}.
\end{align*}
From Lemma \ref{f2squared_over_f1}, we can rewrite this result as 
\begin{equation}
\label{9n1_genfn}
\sum_{n=0}^\infty pend(9n+1)q^{n} 
\equiv
\left( \frac{f_6f_9^2}{f_3f_{18}} + q\frac{f_{18}^2}{f_9} \right) \frac{f_6^4}{f_{12}^2} \pmod{3}.
\end{equation}
Note that the power series representation of the right--hand side of the above congruence contains no terms of the form $q^{3n+2}.$  Thus, 
$$
\sum_{n=0}^\infty pend(9(3n+2)+1)q^{3n+2} \equiv 0 \pmod{3}
$$
which means that, for all $n\geq 0,$
$$
pend(9(3n+2)+1) = pend(27n+19) \equiv 0 \pmod{3}.
$$
\end{proof}

We next consider the proof of Theorem \ref{thm:pend_internal}.
\begin{proof}(of Theorem \ref{thm:pend_internal})
Our goal here is to prove that, for all $n\geq 0,$ 
$$
pend(27n+10) \equiv pend(3n+1) \pmod{3}.
$$  
Thanks to (\ref{9n1_genfn}), we see that 
$$
\sum_{n=0}^\infty pend(27n+10)q^{3n+1} 
\equiv q\frac{f_6^4f_{18}^2}{f_9f_{12}^2} \pmod{3}
$$
which means 
\begin{equation}
\label{27n10_genfn}
\sum_{n=0}^\infty pend(27n+10)q^{n} 
\equiv \frac{f_2^4f_{6}^2}{f_3f_{4}^2} \pmod{3}.    
\end{equation}
From
(\ref{3n1_genfn}), we know 
\begin{align*}
\sum_{n=0}^\infty pend(3n+1)q^n 
&\equiv 
\frac{f_2f_4f_6^3}{f_3f_{12}} \pmod{3} \\
&\equiv 
\frac{f_2f_4f_6^3}{f_3f_{4}^3} \pmod{3} \\
&=
\frac{f_2f_6^3}{f_3f_{4}^2} \\
&\equiv 
\frac{f_2f_2^3f_6^2}{f_3f_4^2} \pmod{3} \\
&= 
\frac{f_2^4f_{6}^2}{f_3f_{4}^2} \\
&\equiv 
\sum_{n=0}^\infty pend(27n+10)q^{n} \pmod{3}
\end{align*}
thanks to (\ref{27n10_genfn}).  
\end{proof}
We are now in a position to prove the infinite family of congruences in Theorem \ref{thm:pend_infinite_family_congs}.
\begin{proof}(of Theorem \ref{thm:pend_infinite_family_congs})
We prove this theorem by induction on $\alpha$.  Note that the base case, $\alpha=1$, which corresponds to the arithmetic progression 
$$
3^3n + \frac{17\cdot 3^{2}-1}{8} = 27n+19,
$$ 
has already been proved in Theorem \ref{thm:pend_cong}.   
Thus, we assume that, for some $\alpha\geq 1$ and all $n\geq 0$, 
$$
pend\left(3^{2\alpha +1}n+\frac{17\cdot 3^{2\alpha}-1}{8}\right) \equiv 0 \pmod{3}.
$$
We then want to prove that 
$$
pend\left(3^{2\alpha +3}n+\frac{17\cdot 3^{2\alpha + 2}-1}{8}\right) \equiv 0 \pmod{3}.
$$
Note that 
\begin{align*}
 3^{2\alpha +1}n+\frac{17\cdot 3^{2\alpha}-1}{8} 
 &= 
 3\left( 3^{2\alpha }n \right) +\frac{17\cdot 3^{2\alpha}-9+8}{8} \\
 &= 
 3\left( 3^{2\alpha }n \right) +3\left( \frac{17\cdot 3^{2\alpha -1}-3}{8}\right) + 1 \\
 &= 
 3\left( 3^{2\alpha }n  + \frac{17\cdot 3^{2\alpha -1}-3}{8}\right) + 1
\end{align*}
and it is easy to argue that 
$$3^{2\alpha }n  + \frac{17\cdot 3^{2\alpha -1}-3}{8}$$
is an integer for any $\alpha \geq 1$.  Therefore, we have the following:  
\begin{align*}
 &
 pend\left( 3^{2\alpha +1}n+\frac{17\cdot 3^{2\alpha}-1}{8} \right) \\
 &= 
 pend\left( 3\left( 3^{2\alpha }n  + \frac{17\cdot 3^{2\alpha -1}-3}{8}\right) + 1 \right) \\
 &\equiv 
 pend\left( 27\left( 3^{2\alpha }n  + \frac{17\cdot 3^{2\alpha -1}-3}{8}\right) + 10 \right) \pmod{3} \textrm{\ \ thanks to Theorem \ref{thm:pend_internal}} \\
 &= 
 pend\left(  3^{2\alpha +3}n  + \frac{17\cdot 3^{2\alpha +2}-27\cdot 3 +10\cdot 8}{8}  \right) \\
 &= 
 pend\left(  3^{2\alpha +3}n  + \frac{17\cdot 3^{2\alpha +2}-1}{8}  \right) \\
 &\equiv 
 0 \pmod{3}
\end{align*}
thanks to the induction hypothesis.  This completes the proof.
    
\end{proof}



\section{Closing Thoughts}  

While it is very satisfying to see the proofs provided above, it would be interesting to see combinatorial proofs of these divisibility properties.  We leave it to the interested reader to obtain such proofs.  

It may also be fruitful to consider further refinements of the functions $pend(n)$ and $pond(n)$.  For example, rather than requiring that even parts must be repeated, one could restrict this requirement to only those parts which are divisible by 4 (with no such requirements on the other parts).  It is certainly straightforward to find the generating functions for such refinements, which means that an analysis such as that above should be possible.  Ballantine and Welch \cite{BalWel} share comments about such partitions (and their enumerating functions) near the end of their manuscript.  The interested reader may wish to study such functions from an arithmetic perspective.  

\section*{Acknowledgements}
The author gratefully acknowledges Shane Chern for beneficial conversations during the development of this work.

\section*{Declarations}

\noindent {\bf Ethical approval} Not applicable.  
\vskip .1in 
\noindent {\bf Competing interests}  The author declares that there are no competing interests. 
\vskip .1in 
\noindent {\bf Funding}  Not applicable.  
\vskip .1in 
\noindent {\bf Availability of data and materials}  Not applicable.

\end{document}